\definecolor{customred}{RGB}{180,50,50}
\newtheorem{theorem}{Theorem}[section]
\newtheorem{corollary}[theorem]{Corollary}
\newtheorem{lemma}[theorem]{Lemma}
\newtheorem{problem}[theorem]{Problem}
\newtheorem{proposition}[theorem]{Proposition}
\theoremstyle{definition}
\newtheorem{definition}[theorem]{Definition}
\newtheorem{remark}[theorem]{Remark}
\newtheorem{example}[theorem]{Example}
\tikzset{>=stealth',
  head/.style = {fill = white, text=black},
  plaque/.style = {draw, rectangle, minimum size = 10mm, fill=white}, 
     pil/.style={->,thick},
  junct/.style = {draw,circle,inner sep=0.5pt,outer sep=0pt, fill=black}
  }
\tikzset{
  otimes/.style={
    draw=none,
    every to/.append style={
      edge node={node [sloped, allow upside down, auto=false]{$\otimes$}}}
  }
}
\DeclareMathOperator{\RS}{RS}
\DeclareMathOperator{\SYT}{SYT}
\DeclareMathOperator{\sort}{sort}
\DeclareMathOperator{\stack}{stack}
\DeclareMathOperator{\stacked}{stacked}
\DeclareMathOperator{\NE}{NE}
\DeclareMathOperator{\NW}{NW}
\DeclareMathOperator{\SE}{SE}
\DeclareMathOperator{\SW}{SW}
\DeclareMathOperator{\prom}{\mathsf{prom}}
\DeclareMathOperator{\gromotion}{\mathcal{G}}
\DeclareMathOperator{\promotion}{\mathcal{P}}
\DeclareMathOperator{\evacuation}{\mathcal{E}}
\DeclareMathOperator{\PEdiagram}{\mathcal{P}\mathcal{E}-\mathsf{diagram}}
\DeclareMathOperator{\PM}{\mathbf{M}}
\newcommand{\vsum}{+'}
\newcommand{\too}[1]{\stackrel{#1}{\to}}
\title{Promotion permutations and the \\ Robinson--Schensted correspondence}
\author{Stephan Pfannerer and Joshua P. Swanson}
\date{September 2025}
\keywords{Standard Young tableaux, Robinson--Schensted correspondence, Viennot geometric shadow line construction, promotion, evacuation, promotion permutations}
\begin{document}

\begin{abstract}
    \emph{Promotion permutations} have recently been associated to each rectangular standard Young tableau \cite{fluctuating-paper}. Here we relate promotion permutations to the Robinson--Schensted (RS) correspondence. More precisely, we show that taking a pair of standard Young tableaux of the same rectangular shape, \emph{stacking} them, and computing the middle promotion permutation yields the RS permutation of the pair up to simple twists. Moreover, the full list of promotion permutations in this special case encodes Viennot's \emph{geometric shadow line} construction. As a consequence, we characterize a subset of the collection of possible promotion permutations in terms of crossing and nesting numbers.
\end{abstract}

\maketitle

\section{Introduction}\label{sec:intro}

Let $\SYT(r \times c)$ denote the set of standard Young tableaux of rectangular shape with $r$ rows, $c$ columns, and $n \coloneqq rc$ cells. The well-known combinatorial bijections of \emph{promotion} $\promotion$ and \emph{evacuation} $\evacuation$ are particularly well-behaved on $\SYT(r \times c)$ \cite{Schutzenberger-promotion,Haiman,Stanley-promotion-evacuation,fluctuating-paper}, e.g.,~they satisfy the dihedral relations $\promotion^n = \evacuation^2 = 1$ and $\evacuation \circ \promotion \circ \evacuation = \promotion^{-1}$. There has been significant interest in providing a direct pictorial understanding of these symmetries, which are not at all obvious on the level of tableaux.

One such explanation was recently provided by \cite{fluctuating-paper} and is as follows. Together with Gaetz, Pechenik, and Striker, we introduced the \emph{promotion permutations}
\[\prom_\bullet(T) = (\prom_1(T), \ldots, \prom_{r-1}(T))
\]
of $T \in \SYT(r \times c)$, where $\prom_i(T) \in \mathfrak{S}_n$ for $1 \leq i \leq r-1$; see \Cref{sec:background-prom-perms} for a quick summary and \Cref{sec:background-Fomin} for further context. The map
\begin{equation}
  \SYT(r \times c) \to \mathfrak{S}_n^{r-1}\qquad\text{ where }\qquad T \mapsto \prom_\bullet(T)
\end{equation}
is injective \cite[Thm.~5.9]{fluctuating-paper}. Drawing $\prom_\bullet(T)$ in a disc, $\promotion$ and $\evacuation$ correspond precisely to rotation and reflection, respectively \cite[\S7]{fluctuating-paper}, making the dihedral relations obvious; see \Cref{sec:background-prom-perms}.

Separately, the \emph{Robinson--Schensted} (RS) correspondence is a very well known bijection
\begin{equation}
  \RS \colon \mathfrak{S}_n \too{\sim} \bigsqcup_{\lambda \vdash n} \SYT(\lambda) \times \SYT(\lambda).
\end{equation}

In the special case when $\lambda = r \times c$, one may ask whether there is a relationship between $\RS$ and $\prom_\bullet$.
%
%
Given a pair $P, Q \in \SYT(r \times c)$, we may \emph{stack} them as in \Cref{fig:stacking-example}, resulting in $S \coloneqq \stack(P, Q) \in \SYT(2r \times c)$. We write $\prom_i^{\NE}(S)$ for the list defined by $\prom_i^{\NE}(S)(j) = \prom_i(S)(j)-n$ for $1 \leq j \leq n$. We show:

\begin{theorem}\label{thm:prom-RS}
    If $P, Q$ are rectangular standard Young tableaux with $r$ rows, then
    \begin{equation}\label{eq:prom-RS}
        \prom_r^{\NE}(\stack(P, Q)) = \RS^{-1}(Q^\top, P^\top).
    \end{equation}
\end{theorem}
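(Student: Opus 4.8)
The plan is to use Viennot's geometric shadow line construction as the common currency between the two sides of \Cref{eq:prom-RS}. On the RS side, I would recall that $\RS^{-1}(Q^\top, P^\top)$ is computed by plotting the graph of a permutation $w$ and iteratively extracting shadow lines whose corner data read off successive rows of the insertion and recording tableaux $Q^\top$ and $P^\top$. On the promotion side, I would first recall from \Cref{sec:background-prom-perms} and \cite{fluctuating-paper} the combinatorial description of the permutations $\prom_i(T)$, and then establish the dictionary promised in the abstract: as $i$ ranges over $1, \dots, 2r-1$, the list $\prom_i(\stack(P,Q))$ records the full nested family of Viennot shadow lines of a single permutation, with the middle index $i = r$ (note $r$ is exactly the center of $1,\dots,2r-1$) recording the undissected permutation itself. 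Granting this dictionary, \Cref{eq:prom-RS} follows because the raw point set of $w$ is precisely what the equatorial level sees.

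First I would unwind $\stack(P,Q) \in \SYT(2r\times c)$: the tableau $P$ occupies the bottom $r$ rows carrying entries $1,\dots,n$, while a suitably transformed copy of $Q$ occupies the top $r$ rows carrying entries $n+1,\dots,2n$, the two halves meeting along an ``equator'' after row $r$. I would then analyze the promotion trajectories at level $r$ and show that the crossing data defining $\prom_r$ factors through exactly this meeting of $P$ and $Q$, pairing a bottom cell with a top cell. In this light the $\NE$-twist $\prom_r^{\NE}(S)(j) = \prom_r(S)(j) - n$ is pure bookkeeping: it renormalizes the top-half values $n+1,\dots,2n$ back into $\{1,\dots,n\}$, reflecting that the equatorial permutation intrinsically matches the $n$ cells of $P$'s half with the $n$ cells of $Q$'s half, i.e. lives in $\mathfrak{S}_n$.

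The heart of the argument is to identify this renormalized middle permutation with $\RS^{-1}(Q^\top, P^\top)$, which I would do by matching local rules: Fomin's growth-diagram local rules for RS against the local rules governing promotion permutations from \cite{fluctuating-paper}. Concretely, I would argue that the trajectory data defining $\prom_r$ traces out exactly the shadow lines of the $w$ with $\RS(w) = (Q^\top, P^\top)$, with the bottom half (from $P$) controlling one coordinate and the top half (from $Q$) the other. The transposes and the swap of the roles of $P$ and $Q$ should emerge from the conventions relating the direction of the promotion slide to the choice of insertion versus recording tableau, together with the row/column interchange that occurs when passing from the tableau picture (rows $1,\dots,2r$) to the permutation-matrix picture in the $\NE$ orientation.

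The hard part will be precisely this convention-matching: pinning down why one lands on $(Q^\top, P^\top)$ rather than $(P,Q)$, $(P^\top, Q^\top)$, or an evacuated variant, and why the twist is a clean subtraction by $n$ rather than a reflection. I expect this to reduce to a careful comparison of orientations---which corner the Viennot light shines from, and how promotion's smallest-entry sliding translates into growth-diagram local rules---most cleanly handled by an induction on $c$ that peels off one shadow line at a time. The dihedral symmetries, with $\promotion$ and $\evacuation$ acting as rotation and reflection of $\prom_\bullet$, should then be used both to organize the non-middle levels and to confirm that the full list of promotion permutations is reflection-symmetric about the equatorial index $r$, which is what pins the middle permutation uniquely to the raw permutation $w$.
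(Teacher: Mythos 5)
Your high-level architecture matches the paper's: \Cref{thm:prom-RS} is indeed deduced from a stronger statement (\Cref{thm:main}) identifying the northeast blocks of $\prom_{r \pm k}(\stack(P,Q))$ with iterated Viennot skeleta, with the middle index $r$ carrying the undissected permutation, and the twists in $(Q^\top, P^\top)$ do come from convention-matching. But the engine you propose for the heart of the argument has genuine gaps. First, ``matching Fomin's growth-diagram local rules against the local rules of \cite{fluctuating-paper}'' would fail as stated: as \Cref{sec:background-Fomin} emphasizes, Fomin's rules act perpendicularly to the rules of \Cref{def:local-rules} (they fill northeast/southwest corners rather than upper-left/lower-right, with a single decoration rather than $r-1$ of them), and the theorem ``does not follow from this existing work.'' What replaces that matching in the paper is the key idea missing from your sketch: an explicit structural description of the northeast $(n+1)\times(n+1)$ block of partitions in the promotion--evacuation diagram, namely the vertical-sum factorization $\lambda(x,y) = \lambda(x,0) \vsum \lambda(0,y)$ (\Cref{lem:proof-vsum}), whose boundary edges encode exactly $Q$ and $\evacuation(P)$. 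This yields the closed formula $\PM(S)^{\NE}(x_{sj}, y_{tj}) = s+t-1$ (\Cref{lem:proof-M}), from which the shadow lines and skeleta are read off by elementary chain comparisons (\Cref{lem:proof-shadows}); no induction on $c$ is needed, and it is unclear your ``peel off one shadow line at a time'' induction could even be set up, since deleting a shadow line does not correspond to deleting a column of the stacked tableau (the remaining entries are no longer consecutive, and $\evacuation(P)$ minus a column is not the evacuation of a smaller tableau).

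Second, your closing symmetry argument does not pin anything: the reflection symmetry about the equatorial index is \Cref{thm:prom-perm-symmetries}(d), which only says $\prom_r(S)$ is a fixed-point-free involution; it cannot identify $\prom_r^{\NE}(S)$ with a specific permutation $w$. That identification is precisely what \Cref{lem:proof-M} supplies. Two convention slips in your sketch are symptomatic of the same looseness: the stacking puts $P$ on top with entries $1,\dots,n$ and $Q$ below shifted by $n$, the reverse of what you wrote; and the construction's natural output is $\mathcal{V}(\mathcal{P}^{\NE}(\rho_r), \nearrow) = (\evacuation(P), Q)$ --- evacuation enters unavoidably because the $n$-th column of the PE-diagram encodes $\evacuation(P)$ (\Cref{rem:PEdiagram-pieces}) --- and only after rotating the light to $\searrow$ via \Cref{thm:Viennot-symmetries}, and accounting for the rotated permutation-matrix convention, does the evacuation cancel to give $(Q^\top, P^\top)$. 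Your own worry about landing on ``an evacuated variant'' is exactly where the proof must do real work, and your sketch offers no mechanism to resolve it.
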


\begin{figure}[hbtp]
    \[
        \begin{array}{cc}
        \begin{array}{c}
        P = \vcenter{\hbox{\ytableaushort{1356,279{11},48{10}{12}}\vspace{1em}}} \\[6pt]
        Q = \vcenter{\hbox{\ytableaushort{1237,456{10},89{11}{12}}}}
        \end{array}
        &
        \stack(P, Q) = \vcenter{\hbox{\ytableaushort{1356,279{11},48{10}{12},{13}{14}{15}{19},{16}{17}{18}{22},{20}{21}{23}{24}}}}
        \end{array}
    \]
    \caption{Example of stacking tableaux. Here we add $3 \times 4 = 12$ to the entries in $Q$ and place them below $P$.}
    \label{fig:stacking-example}
\end{figure}

\begin{example}\label{ex:prom-RS}
    Let $P, Q$ be as in \Cref{fig:stacking-example}, so $r=3, c=4, n=12$. One finds that
    \begin{align*}
      \prom_3(\stack(P, Q))
        &= [19, 22, 15, 24, 14, 13, 18, 23, 17, 21, 16, 20, 6, 5, 3, 11, 9, 7, 1, 12, 10, 2, 8, 4] \in \mathfrak{S}_{24} \\
      \prom_3^{\NE}(\stack(P, Q))
        &= [7, 10, 3, 12, 2, 1, 6, 11, 5, 9, 4, 8] \in \mathfrak{S}_{12}.
    \end{align*}
    Applying the usual $\RS$ insertion algorithm to the latter permutation results in $(Q^\top, P^\top)$. 
\end{example}

Characterizing the image of $\prom_\bullet$ is an open problem; see \Cref{prob:prom-perms}. However, \Cref{thm:main} gives a partial answer; see \Cref{fig:matching-alt} for an example. Let $\SYT^{\stacked}(2r \times c)$ denote the subset of $\SYT(2r \times c)$ consisting of tableaux of the form $\stack(P, Q)$ for $P, Q \in \SYT(r \times c)$.

\begin{corollary}\label{cor:prom-RS}
    Let $n = rc$. The map $\SYT^{\stacked}(2r \times c) \to \mathfrak{S}_{2n}$ given by $S \mapsto \prom_r(S)$ is injective. The image is precisely the set of fixed-point free involutions $\rho$ on $\mathfrak{S}_{2n}$ where each $2$-cycle $(a\ b)$ of $\rho$ is of the form $a \in \{1, \ldots, n\}, b \in \{n+1, \ldots, 2n\}$, the crossing number of $\rho$ is $r$, and the nesting number of $\rho$ is $c$.
\end{corollary}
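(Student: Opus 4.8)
The plan is to combine \Cref{thm:prom-RS} with two classical ingredients: a symmetry of promotion permutations, and Schensted's theorem on monotone subsequences. Throughout write $S = \stack(P,Q)$, $n = rc$, and $w \coloneqq \RS^{-1}(Q^\top, P^\top) \in \mathfrak{S}_n$, so that $\RS(w) = (Q^\top, P^\top)$. By definition of $\prom_r^{\NE}$, \Cref{thm:prom-RS} says precisely that $\prom_r(S)(j) = w(j) + n$ for $1 \le j \le n$; in particular $\prom_r(S)$ carries $\{1, \dots, n\}$ into $\{n+1, \dots, 2n\}$. The first task is to upgrade this to the full claim that $\rho \coloneqq \prom_r(S)$ is a fixed-point-free involution whose every $2$-cycle has the form $(a\ b)$ with $a \le n < b$. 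I would reduce the bipartite structure to a single fact: once $\rho$ is known to be an involution, the structure is automatic, since an involution carrying the $n$-element set $\{1, \dots, n\}$ into $\{n+1, \dots, 2n\}$ must swap the two halves bijectively and hence be fixed-point-free with every $2$-cycle of the form $(j,\, w(j)+n)$, $j \le n$.

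I expect the involution property to be the main obstacle, since \Cref{thm:prom-RS} only controls the ``northeast'' half of $\rho$. I would obtain it from the symmetry of $\prom_\bullet$ under evacuation recorded in \cite[\S7]{fluctuating-paper}, exploiting that the middle index of a tableau with an even number of rows is self-paired under the associated reflection: for a $(2r)$-row shape the reflection $i \mapsto 2r-i$ fixes $i = r$, which should force $\rho^{-1} = \rho$. The delicate point is making the exact form of this reflection on the permutation precise; an equivalent route, suggested by the abstract, is to identify $\prom_\bullet(S)$ with Viennot's geometric shadow-line diagram of $w$ and to read the involution off the diagonal symmetry of that diagram, which interchanges $P^\top \leftrightarrow Q^\top$ and realizes the RSK symmetry $w \leftrightarrow w^{-1}$. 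Either way, combining the involution with the northeast half yields $\rho(b) = w^{-1}(b-n)$ for $b > n$, and $\rho \circ \rho = \mathrm{id}$ is then a one-line check.

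With $\rho$ in hand I would read off its crossings and nestings. Placing $1, \dots, 2n$ on a line and joining each $2$-cycle by a chord, the chords are exactly $(j,\, w(j)+n)$ for $1 \le j \le n$; since every right endpoint exceeds every left endpoint, for $j_1 < j_2$ the chords $(j_1, w(j_1)+n)$ and $(j_2, w(j_2)+n)$ cross iff $w(j_1) < w(j_2)$ and nest iff $w(j_1) > w(j_2)$. Thus pairwise-crossing families correspond to increasing subsequences of $w$ and pairwise-nesting families to decreasing subsequences, so the crossing number equals $\mathrm{LIS}(w)$ and the nesting number equals $\mathrm{LDS}(w)$. As $\RS(w) = (Q^\top, P^\top)$ has shape $(r \times c)^\top$, Schensted's theorem gives $\mathrm{LIS}(w) = r$ and $\mathrm{LDS}(w) = c$, the asserted values.

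Finally, both injectivity and surjectivity onto the stated set follow from bijectivity of $\RS$. For injectivity, $\rho$ determines $w$, hence the pair $(Q^\top, P^\top) = \RS(w)$, hence $(P,Q)$ and $S = \stack(P,Q)$. For surjectivity, given any fixed-point-free involution $\rho$ of the stated bipartite form with crossing number $r$ and nesting number $c$, define $w \in \mathfrak{S}_n$ by $w(a) = b-n$ on each $2$-cycle $(a\ b)$ with $a \le n$; the crossing/nesting dictionary gives $\mathrm{LIS}(w) = r$ and $\mathrm{LDS}(w) = c$. Writing $\lambda$ for the shape of $\RS(w)$, these read $\lambda_1 = r$ and $\ell(\lambda) = c$, and since $|\lambda| = n = rc$ this forces $\lambda$ to fill the box, i.e.\ $\lambda = (r \times c)^\top$. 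Setting $(Q^\top, P^\top) \coloneqq \RS(w)$ recovers $P, Q \in \SYT(r \times c)$ with $S = \stack(P,Q)$; by \Cref{thm:prom-RS} the involution $\prom_r(S)$ agrees with $\rho$ on $\{1, \dots, n\}$, and as both are involutions of the same bipartite form they agree everywhere, so $\rho = \prom_r(S)$.
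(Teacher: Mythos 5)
Your proposal is correct and takes essentially the same route as the paper: injectivity and the $2$-cycle structure from \Cref{thm:prom-RS}, the involution property from \Cref{thm:prom-perm-symmetries}(d) applied to the $2r$-row shape (your reflection $i \mapsto 2r-i$ fixing $i=r$ is exactly that statement, so the step you flag as delicate is already available verbatim and needs no Viennot-diagram detour), and the crossing/nesting numbers via the chords-versus-monotone-subsequences dictionary, which the paper phrases equivalently as southeast/southwest chains of $1$'s in $M(\rho)^{\NE}$ and settles by Greene's theorem where you invoke Schensted. Your surjectivity step (forcing $\lambda = (r \times c)^\top$ from $\lambda_1 = r$, $\ell(\lambda) = c$, $|\lambda| = rc$) likewise matches the paper's argument in substance.
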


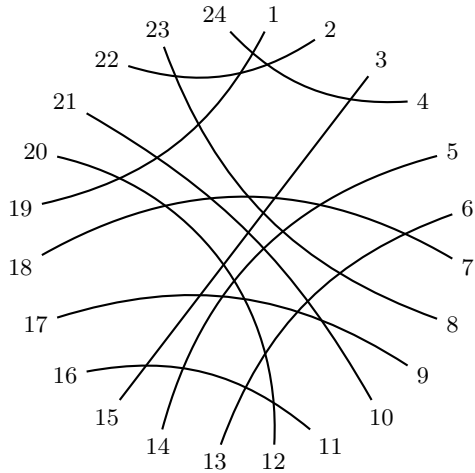
\begin{figure}[hbtp]
    \centering
    
    \begin{tikzpicture}
      \def\r{3cm}
    
      \foreach \i in {1,...,24} {
        \node[font=\small] (v\i) at ({90-7.5-(\i-1)*15}:\r) {\i};
      }
    
      \draw[thick, bend left=25] (v1) to (v19);
      \draw[thick, bend left=25] (v2) to (v22);
      \draw[thick] (v3) to (v15);
      \draw[thick, bend left=25] (v4) to (v24);
      \draw[thick, bend right=30] (v5) to (v14);
      \draw[thick, bend right=25] (v6) to (v13);
      \draw[thick, bend right=30] (v7) to (v18);
      \draw[thick, bend left=25] (v8) to (v23);
      \draw[thick, bend right=25] (v9) to (v17);
      \draw[thick, bend right=15] (v10) to (v21);
      \draw[thick, bend right=25] (v11) to (v16);
      \draw[thick, bend right=40] (v12) to (v20);
    \end{tikzpicture}

    \caption{The perfect matching of $\prom_3(\stack(P, Q))$ for $P, Q \in \SYT(3 \times 4)$ from \Cref{ex:prom-RS}. Observe that the maximum number of mutually crossing strands is $3$ (e.g., $\{1, 19\}, \{2, 22\}, \{4, 24\}$) and the maximum number of nested strands is $4$ (e.g., $\{1, 19\}, \{3, 15\}, \{5, 14\}, \{6, 13\})$, in agreement with \Cref{cor:prom-RS}.}
    \label{fig:matching-alt}
\end{figure}

\begin{remark}\label{rem:this}
    Indeed, in the rectangular case, we may recover the entirety of Viennot's shadow line construction, including all ``skeleta,'' by examining the other promotion permutations in the northeast (or southwest) quadrant. See \Cref{thm:main} and \Cref{ex:main} for the corresponding generalization of \eqref{eq:prom-RS}.
\end{remark}

The rest of the paper is organized as follows. In \Cref{sec:background}, we give background on promotion and evacuation; promotion permutations; local rules, growth diagrams, and decorations; Fomin's local rules; crossing and nesting numbers; and Viennot's geometric shadow line construction. In \Cref{sec:main}, we state and prove our main result, \Cref{thm:main}, which implies \Cref{thm:prom-RS}, and we deduce \Cref{cor:prom-RS}.

\subsection{Acknowledgments}

The authors would like to thank Ron Cherny, Christian Gaetz, Oliver Pechenik, and Jessica Striker for enlightening discussions. Pfannerer were partially supported by Olya Mandelshtam's and Oliver Pechenik's Discovery Grants (RGPIN-2021-02568, RGPIN-2021-02391) and Pechenik's Launch Supplement (DGECR-2021-00010) from the Natural Sciences and Engineering Research Council of Canada. Swanson was partially supported by NSF grant DMS-2348843. This work was completed while Swanson was in residence at ICERM, which is supported by NSF grant DMS-1929284, during the Fall 2025 Semester Program on Categorification and Computation in Algebraic Combinatorics.

\section{Background}\label{sec:background}
We use standard notation for partitions, tableaux, and the RS correspondence as in \cite[Ch.~7]{Stanley:EC2} or \cite[Ch.~3]{Sagan-symmetric-group}. 

\subsection{Promotion and evacuation}\label{sec:background-promotion-evacuation}

We use three related combinatorial operations  on standard Young tableaux associated with Sch\"utzenberger \cite{Schutzenberger-promotion}. They are based on Sch\"utzenberger's \emph{jeu de taquin} (see \cite[A1.2]{Stanley:EC2} or \cite[\S3.7]{Sagan-symmetric-group}). Let $T \in \SYT(\lambda)$ where $\lambda \vdash n$.
\begin{itemize}
    \item \emph{Promotion} deletes $1$ from $T$, rectifies by applying jeu de taquin slides until the empty cell hits an outer corner of $\lambda$, fills this outer corner with $n+1$, and decrements all entries by $1$. The result is $\promotion(T) \in \SYT(\lambda)$.
    \item \emph{Evacuation} successively deletes $1, 2, \ldots, n$ from $T$, rectifying each time, and creates $\evacuation(T) \in \SYT(\lambda)$ by reading the sequence of outer corners in reverse.
    \item \emph{Gromotion}\footnote{This term was coined in \cite{promotion-digraphs}, where instead the finite alphabet $[n]$ itself rotates from $1 < \cdots < n$ to $2 < \cdots < n < 1$ to $3 < \cdots < n < 1 < 2$, etc. One may consider our version as taking place on the infinite alphabet $\mathbb{Z}$. The construction in \cite{promotion-digraphs} generalizes \cite{fluctuating-paper} to produce \emph{promotion digraphs} associated to increasing or non-rectangular tableaux. All constructions agree for $\SYT(r \times c)$. Whether to use gromotion on an infinite or finite alphabet to compute promotion permutations is a stylistic preference and not a substantive difference.} (on an infinite alphabet) is the same as promotion, except the entries are not decremented. Instead, under repeated applications of gromotion, the labels change from $1 < \cdots < n$ to $2 < \cdots < n+1$ to $3 < \cdots < n+2$, etc. Write $\gromotion(T)$ for the gromotion of $T$.
\end{itemize}

\begin{example}\label{ex:PEG}
    We apply the various operations to $Q$ from \Cref{fig:stacking-example}. For later use, we indicate the sliding path in powers of gromotion, with upward slides into the first or second row highlighted in orange or teal, respectively.

    \newsavebox{\exPEGa}
    \sbox{\exPEGa}{
        \begin{tikzpicture}[baseline={([yshift=-.6ex]current bounding box.center)}]
        \matrix (m) [matrix of math nodes,
                     nodes={draw, minimum size=6.2mm, anchor=center},
                     column sep=-\pgflinewidth,
                     row sep=-\pgflinewidth,
                     ampersand replacement=\&
                     ]
        {
         1  \&  2 \&  3 \&  7\\
         4  \&  5 \&  6 \& 10\\
         8  \&  9 \& 11 \& 12\\
        };
        \draw[->,shorten >=0.15cm,shorten <=0.15cm,black,thick] (m-1-2.center) -- (m-1-1.center);
        \draw[->,shorten >=0.15cm,shorten <=0.15cm,black,thick] (m-1-3.center) -- (m-1-2.center);
        \draw[->,shorten >=0.15cm,shorten <=0.15cm,orange,thick] (m-2-3.center) -- (m-1-3.center);
        \draw[->,shorten >=0.15cm,shorten <=0.15cm,black,thick] (m-2-4.center) -- (m-2-3.center);
        \draw[->,shorten >=0.15cm,shorten <=0.15cm,teal,thick] (m-3-4.center) -- (m-2-4.center);
    \end{tikzpicture}}
    
    \newsavebox{\exPEGb}
    \sbox{\exPEGb}{
        \begin{tikzpicture}[baseline={([yshift=-.6ex]current bounding box.center)}]
        \matrix (m) [matrix of math nodes,
                     nodes={draw, minimum size=6.2mm, anchor=center},
                     column sep=-\pgflinewidth,
                     row sep=-\pgflinewidth,
                     ampersand replacement=\&
                     ]
        {
         2  \&  3 \&  6 \&  7\\
         4  \&  5 \& 10 \& 12\\
         8  \&  9 \& 11 \& 13\\
        };
        \draw[->,shorten >=0.15cm,shorten <=0.15cm,black,thick] (m-1-2.center) -- (m-1-1.center);
        \draw[->,shorten >=0.15cm,shorten <=0.15cm,orange,thick] (m-2-2.center) -- (m-1-2.center);
        \draw[->,shorten >=0.15cm,shorten <=0.15cm,teal,thick] (m-3-2.center) -- (m-2-2.center);
        \draw[->,shorten >=0.15cm,shorten <=0.15cm,black,thick] (m-3-3.center) -- (m-3-2.center);
        \draw[->,shorten >=0.15cm,shorten <=0.15cm,black,thick] (m-3-4.center) -- (m-3-3.center);
        \end{tikzpicture}
    }
    
    \newsavebox{\exPEGc}
    \sbox{\exPEGc}{
        \begin{tikzpicture}[baseline={([yshift=-.6ex]current bounding box.center)}]
        \matrix (m) [matrix of math nodes,
                     nodes={draw, minimum size=6.2mm, anchor=center},
                     column sep=-\pgflinewidth,
                     row sep=-\pgflinewidth,
                     ampersand replacement=\&
                     ]
        {
         3  \&  5 \&  6 \&  7\\
         4  \&  9 \& 10 \& 12\\
         8  \& 11 \& 13 \& 14\\
        };
        \draw[->,shorten >=0.15cm,shorten <=0.15cm,orange,thick] (m-2-1.center) -- (m-1-1.center);
        \draw[->,shorten >=0.15cm,shorten <=0.15cm,teal,thick] (m-3-1.center) -- (m-2-1.center);
        \draw[->,shorten >=0.15cm,shorten <=0.15cm,black,thick] (m-3-2.center) -- (m-3-1.center);
        \draw[->,shorten >=0.15cm,shorten <=0.15cm,black,thick] (m-3-3.center) -- (m-3-2.center);
        \draw[->,shorten >=0.15cm,shorten <=0.15cm,black,thick] (m-3-4.center) -- (m-3-3.center);
        \end{tikzpicture}
    }
    
    \newsavebox{\exPEGd}
    \sbox{\exPEGd}{
        \begin{tikzpicture}[baseline={([yshift=-.6ex]current bounding box.center)}]
        \matrix (m) [matrix of math nodes,
                     nodes={draw, minimum size=6.2mm, anchor=center},
                     column sep=-\pgflinewidth,
                     row sep=-\pgflinewidth,
                     ampersand replacement=\&
                     ]
        {
         1  \&  2 \&  4 \&  5\\
         3  \&  7 \&  8 \&  9\\
         6  \& 10 \& 11 \& 12\\
        };
        \end{tikzpicture}
    }
    
    \newsavebox{\exPEGe}
    \sbox{\exPEGe}{
        \begin{tikzpicture}[baseline={([yshift=-.6ex]current bounding box.center)}]
        \matrix (m) [matrix of math nodes,
                     nodes={draw, minimum size=6.2mm, anchor=center},
                     column sep=-\pgflinewidth,
                     row sep=-\pgflinewidth,
                     ampersand replacement=\&
                     ]
        {
         1  \&  2 \&  5 \&  6\\
         3  \&  4 \&  9 \& 11\\
         7  \&  8 \& 10 \& 12\\
        };
        \end{tikzpicture}
    }
    
    \newsavebox{\exPEGf}
    \sbox{\exPEGf}{
        \begin{tikzpicture}[baseline={([yshift=-.6ex]current bounding box.center)}]
        \matrix (m) [matrix of math nodes,
                     nodes={draw, minimum size=6.2mm, anchor=center},
                     column sep=-\pgflinewidth,
                     row sep=-\pgflinewidth,
                     ampersand replacement=\&
                     ]
        {
         1  \&  3 \&  4 \&  5\\
         2  \&  7 \&  8 \& 10\\
         6  \&  9 \& 11 \& 12\\
        };
        \end{tikzpicture}
    }
    
    \begin{center}
    \begin{tikzcd}
        {\usebox{\exPEGa}}
          \rar{\gromotion} \ar{dr}{\promotion} \ar[d,leftrightarrow,"\evacuation"]
          & {\usebox{\exPEGb}}
          \rar{\gromotion} \dar{-1}
          & {\usebox{\exPEGc}}
          \rar{\gromotion} \dar{-2}
          & \cdots \\
        {\usebox{\exPEGd}}
          & {\usebox{\exPEGe}}
          \rar{\promotion}
          & {\usebox{\exPEGf}}
          \rar{\promotion}
          & \cdots
    \end{tikzcd}
    \end{center}
\end{example}

\subsection{Promotion permutations}\label{sec:background-prom-perms}
Promotion permutations were introduced in \cite{fluctuating-paper}, building on work of Hopkins--Rubey \cite{Hopkins-Rubey}. Six equivalent definitions were given in \cite{fluctuating-paper} in the generality of \emph{fluctuating tableaux}. Here we summarize just two of these in the special case of rectangular standard tableaux.

\begin{definition}\label{def:prom-perms}
    Let $T \in \SYT(r \times c)$. The $i$-th \emph{promotion permutation} of $T$ lists the entries which slide into row $i$ when computing $\gromotion(T), \gromotion^2(T), \ldots, \gromotion^n(T)$, modulo $n \coloneqq rc$ (with residues in $\{1, \ldots, n\}$).
\end{definition}

\begin{example}\label{ex:prom-sliding}
    For $Q$ from \Cref{fig:stacking-example}, we see $\prom_1(Q) = [6, 5, 4, \ldots]$ since in \Cref{ex:PEG}, $6$ slides into the first row when computing $\gromotion(Q)$, then $5$ slides into the first row when computing $\gromotion^2(Q)$, etc. Similarly we see $\prom_2(Q) = [12, 9, 8, \ldots]$ since these entries slide into the second row. In all,
    \begin{align*}
        \prom_1(Q) &= [6, 5, 4, 12, 10, 9, 8, 3, 2, 11, 7, 1] \\
        \prom_2(Q) &= [12, 9, 8, 3, 2, 1, 11, 7, 6, 5, 10, 4].
    \end{align*}
\end{example}

The astute reader may observe in \Cref{ex:prom-sliding} that $\prom_2(Q) = \prom_1(Q)^{-1}$. This is part of a general phenomenon and is broadly related to the dihedral symmetry of $\promotion$ and $\evacuation$ acting on $\SYT(r \times c)$. Let $\sigma = (1\,2\,\cdots\,n)$ (in cycle notation) and $w_0 = [n-1, \ldots, 1]$ (in one-line notation). We have the following.

\begin{theorem}[{\cite[Thm.~6.7]{fluctuating-paper}}\protect\footnote{A typo in the original published version of \cite[Thm.~6.7]{fluctuating-paper} used $i$ instead of $r-i$ on the right-hand side of the equivalent of what we have called \Cref{thm:prom-perm-symmetries}(c).}]\label{thm:prom-perm-symmetries}
    Let $T \in \SYT(r \times c)$ and $1 \leq i \leq r-1$. Then
    \begin{enumerate}[(a)]
        \item $\prom_i(T)$ is a permutation of $\{1, \ldots, n\}$ where $n = rc$,
        \item $\prom_i(\promotion(T)) = \sigma^{-1} \prom_i(T) \sigma$,
        \item $\prom_i(\evacuation(T)) = w_0 \prom_{r-i}(T) w_0$, and
        \item $\prom_{r-i}(T) = \prom_i(T)^{-1}$.
    \end{enumerate}
\end{theorem}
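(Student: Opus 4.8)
The plan is to work throughout with gromotion on the infinite alphabet $\mathbb{Z}$, since by \Cref{ex:PEG} promotion and gromotion differ only by a global relabeling and the promotion permutations are defined via gromotion. Set $T_t \coloneqq \gromotion^t(T)$, so that $T_t$ is filled with the values $\{t+1, \ldots, t+n\}$, and the gromotion $T_{t-1} \to T_t$ (``the gromotion at time $t$'') deletes the minimum $t$ from the corner $(1,1)$ and inserts $t+n$ at the unique outer corner $(r,c)$. Its jeu-de-taquin hole traces a single lattice path from $(1,1)$ to $(r,c)$ using only down- and right-steps; since it starts in row $1$ and ends in row $r$, it makes exactly one down-step across each of the $r-1$ horizontal boundaries, and each such down-step slides exactly one entry up from row $k+1$ into row $k$. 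Tracking a fixed value $v$, it is inserted in row $r$ at time $v-n$ and deleted from row $1$ at time $v$; because an entry participates in at most one slide per gromotion, $v$ can only move up (or left), one row at a time, so it crosses each boundary exactly once. Write $\tau_i(v)$ for the time of its crossing from row $i+1$ into row $i$.

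For (a), each gromotion contributes exactly one value crossing into row $i$, so $\prom_i(T)$ is a well-defined length-$n$ list, and $v \mapsto \tau_i(v)$ is a bijection $\mathbb{Z} \to \mathbb{Z}$ (injective because only one value crosses per gromotion; surjective because every gromotion has a crossing). The periodicity $\gromotion^n(T) = T + n$ (equivalent to $\promotion^n = 1$ on rectangles) gives $\tau_i(v+n) = \tau_i(v) + n$, so $\tau_i$ descends to a bijection of $\mathbb{Z}/n$, and $\prom_i(T)\colon j \mapsto \tau_i^{-1}(j) \bmod n$ is therefore a permutation of $\{1, \ldots, n\}$. For (b), gromotion on $\mathbb{Z}$ commutes with the shift $x \mapsto x+1$, and $\promotion(T) = \gromotion(T) - 1$; hence $\gromotion^j(\promotion(T)) = \gromotion^{j+1}(T) - 1$, so the value crossing into row $i$ during the $j$-th gromotion of $\promotion(T)$ is one less than the value crossing into row $i$ during the $(j+1)$-th gromotion of $T$. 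Reducing modulo $n$ gives $\prom_i(\promotion(T))(j) = \prom_i(T)(j+1) - 1$, which is exactly $(\sigma^{-1}\,\prom_i(T)\,\sigma)(j)$ for $\sigma = (1\,2\,\cdots\,n)$.

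For (c), I would use that on rectangles $\evacuation(T)$ is the reverse-complement $\evacuation(T)_{i,j} = n+1-T_{r+1-i,\,c+1-j}$. This central symmetry flips rows, so that boundary $i$ of $T$ is matched with boundary $r-i$ of $\evacuation(T)$, and, together with the identity $\evacuation\promotion\evacuation = \promotion^{-1}$, it converts the forward gromotion sliding of $\evacuation(T)$ into the time-reversed sliding of $T$. Carrying the complementation $a \mapsto n+1-a$ through both the value axis and the (reversed) time axis produces exactly the two flanking copies of $w_0$, yielding $\prom_i(\evacuation(T)) = w_0\,\prom_{r-i}(T)\,w_0$.

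Part (d) is the crux and the step I expect to be hardest: it asserts the \emph{time--value duality} that the time $\tau_i(j)$ at which value $j$ crosses into row $i$ agrees, modulo $n$, with the value crossing into row $r-i$ at time $j$. Unlike (b) and (c), this does not follow from a relabeling or from the known dihedral identities, since it exchanges the two roles (``time'' and ``value'') that parametrize $\prom_\bullet$. My plan is to assemble all of the sliding paths $\{P_t\}_{t \in \mathbb{Z}}$ into a single growth-diagram-type array whose two axes record time and position, and to exhibit a reflection of this array that swaps the two axes while simultaneously flipping $i \leftrightarrow r-i$; under this reflection the row-$i$ readout $\prom_i(T)$ is carried to the row-$(r-i)$ readout traversed in the opposite direction, giving $\prom_{r-i}(T) = \prom_i(T)^{-1}$. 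Making this reflection precise — identifying the right symmetric object and checking that it is preserved by the local jeu-de-taquin moves — is where the real work lies, and it is most naturally handled with the local-rule and growth-diagram machinery developed later in \Cref{sec:background}.
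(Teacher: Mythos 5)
Your proposal cannot be checked against an internal argument: the paper does not prove \Cref{thm:prom-perm-symmetries} at all, but imports it wholesale from \cite[Thm.~6.7]{fluctuating-paper}, so your attempt has to stand on its own merits. Parts (a) and (b) do stand: for a rectangle the gromotion hole travels from $(1,1)$ to the unique outer corner $(r,c)$ and makes exactly one down-step across each of the $r-1$ row boundaries, and combined with the periodicity $\gromotion^n(T) = T + n$ (Haiman's $\promotion^n = 1$, which the paper treats as known) your descent of $\tau_i$ to a bijection of $\mathbb{Z}/n$ proves (a); your shift-equivariance computation $\prom_i(\promotion(T))(j) = \prom_i(T)(j+1) - 1 = (\sigma^{-1}\prom_i(T)\sigma)(j)$ is exactly right for (b). (One implicit step: for original entries $v \leq n$ the ``inserted in row $r$ at time $v-n$'' bookkeeping needs the bi-infinite extension $T_t \coloneqq \gromotion^{t+n}(T) - n$ for $t < 0$, which your periodicity argument licenses but which you should state.) Part (c) is only a sketch: reverse-complementation of $\evacuation$ on rectangles and $\evacuation \promotion \evacuation = \promotion^{-1}$ are the right inputs, but the bookkeeping that converts ``time reversal plus value complementation'' into the two flanking copies of $w_0$ is asserted rather than performed.

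The genuine gap is (d), and you flag it yourself: ``making this reflection precise \ldots\ is where the real work lies'' is a plan, not a proof, and (d) is precisely the one part that exchanges the time and value axes rather than relabeling them. Two concrete warnings about the plan. First, circularity: via \Cref{prop:prom-matrix}, statement (d) is equivalent to the transpose symmetry in the third bullet of \Cref{rem:prom-structure} (an entry $i$ has an entry $r-i$ in the transposed position), and both \Cref{prop:prom-matrix} and that remark are themselves quoted from \cite{fluctuating-paper}; in particular the equivalence between the sliding definition (\Cref{def:prom-perms}) and the decoration/PE-diagram definition, which your growth-diagram strategy needs as a prerequisite, is an unproved import (\cite[Prop.~5.19]{fluctuating-paper}), so you must either prove it or accept a partially cited argument. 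Second, the reflection you posit cannot be the bare transpose of the array: in a local rule diagram the decoration depends only on $\nu - \kappa$ (\Cref{rem:sorting-cases}), which is invariant under transposing the square, so a naive axis swap preserves the labels $i$ and would produce a statement of the shape $\prom_i(T') = \prom_i(T)^{-1}$, not the required flip $i \mapsto r-i$. The symmetry that can work must combine transposition of the array with complementation of every partition inside the $r \times c$ rectangle (which sends two stacked cells in rows $i, i+1$ to rows $r-i, r-i+1$, hence decoration $i$ to $r-i$), and one must then verify that this composite preserves local rule diagrams and carries the promotion-evacuation parallelogram of $T$ to one of the same kind. None of this is in your writeup, so (d) --- and with it the theorem --- remains unproved.
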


\begin{remark}
    \Cref{thm:prom-perm-symmetries}(b)-(c) show that promotion corresponds to rotation clockwise by $2\pi/(rc)$ and that evacuation corresponds to reflection through the $y$-axis together with tuple reversal; see \cite[\S7]{fluctuating-paper} and \Cref{ex:prom-dihedral} for details. When $T \in \SYT(2r \times c)$, then \Cref{thm:prom-perm-symmetries}(d) says that $\prom_r(T)$ is an involution. It is clear from \Cref{def:prom-perms} that promotion permutations are fixed-point free.
\end{remark}

\begin{example}\label{ex:prom-dihedral}
    By computing powers of gromotion, we find
    \begin{center}
    \begin{tabular}{m{2cm}m{7cm}m{5cm}}
    $T = \vcenter{\hbox{\ytableaushort{13,25,46}}}$
    &
    $\begin{array}{l}
      \prom_1(T) = [2, 5, 4, 1, 6, 3] = (1\,2\,5\,6\,3\,4) \\
      \prom_2(T) = [4, 1, 6, 3, 2, 5] = \prom_1(T)^{-1}
    \end{array}$
    &
    \includegraphics[scale=0.35, trim={0.4cm 3.5cm 11.0cm 0.35cm},clip]{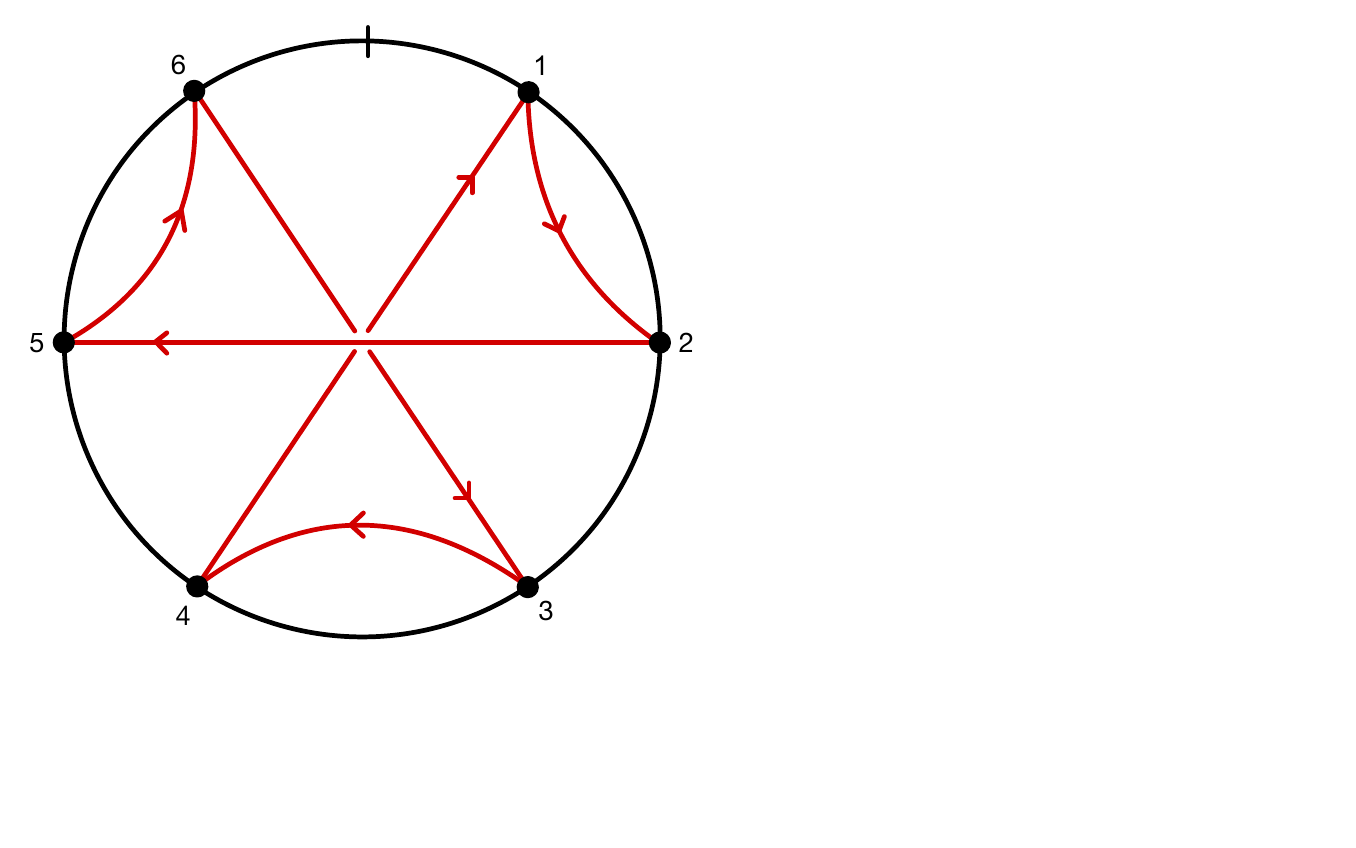}
    \end{tabular}
    \end{center}
    The diagram depicts $\prom_1(T)$ as a directed graph. By \Cref{thm:prom-perm-symmetries}, for $r=3$ we have $\prom_2(T) = \prom_1(T)^{-1}$, so $\prom_2$ may generally be obtained in this case by reversing the arrows. One may compute $\promotion(T) = 12/34/56$ and $\prom_1(\promotion(T)) = [4, 3, 6, 5, 2, 1] = (2\,3\,6\,1\,4\,5)$, so indeed $\promotion$ rotates the digraph clockwise by $2\pi/6$. Since the digraph exhibits a $3$-fold symmetry, $\promotion^2(T) = T$ here. Similarly, evacuation reflects the digraph through the vertical axis and swaps $\prom_1$ with $\prom_2$. Hence we see this $T$ is self-evacuating, i.e., $\evacuation(T) = T$, as may be verified directly. For $r \geq 4$, one genuinely must track multiple promotion permutations simultaneously.
\end{example}

\subsection{Local rules, growth diagrams, and decorations}\label{sec:background-local-rules}

While \Cref{def:prom-perms} is a simple and memorable description of promotion permutations, our arguments below require a different, though equivalent, description. The following is directly adapted from \cite[\S3-\S5]{fluctuating-paper}, specialized to the case of rectangular standard tableaux. See \Cref{sec:background-Fomin} for historical remarks and further context.

Write $\lambda \to \nu$ to mean that $\lambda$ and $\nu$ are partitions where $\nu$ is obtained from $\lambda$ by adding a single cell. Recall that $T \in \SYT(\lambda)$ can be thought of as a sequence of partitions $\varnothing = \lambda^0 \to \lambda^1 \to \cdots \to \lambda^n = \lambda$, where $\lambda^i$ is the shape of the tableau obtained from $T$ by removing all cells filled with entries greater than $i$.

\begin{definition}\label{def:local-rules}
  A \emph{(decorated) local rule diagram} is a square
  \begin{equation}\label{eq:local_rule}
  \begin{tikzcd}
  \lambda \ar{r}[name=U]{}
    & \nu \\
  \kappa \uar{} \ar{r}[swap,name=D]{}
    & \mu \ar{u}[swap]{}
  \ar[to path={(U) node[midway] {$(i)$}  (D)}]{}
  \end{tikzcd}
  \end{equation}
  where
  \begin{equation}\label{eq:local_rule.sort}
    \mu = \sort(\nu + \kappa - \lambda) \qquad\text{and}\qquad
      \lambda = \sort(\nu+\kappa-\mu),
  \end{equation}
  with pointwise addition and subtraction. The optional decoration $i$ is present if and only if $\nu + \kappa - \lambda$ is not already sorted, in which case sorting is accomplished by applying a unique simple transposition $s_i = (i\ i+1)$ for $1 \leq i \leq r-1$.

  A \emph{local rule} fills in a missing lower right or upper left corner in \eqref{eq:local_rule} with $\mu$ or $\lambda$ as determined by \eqref{eq:local_rule.sort}. In fact, the two conditions of \eqref{eq:local_rule.sort} imply each other, so local rules result in local rule diagrams.
\end{definition}

\begin{remark}\label{rem:sorting-cases}
    In such a local rule diagram, there are three possibilities: (i) $\nu$ differs from $\kappa$ by two disconnected cells; (ii) $\nu$ differs from $\kappa$ by two adjacent cells in the same row; and (iii) $\nu$ differs from $\kappa$ by two adjacent cells in the same column. In case (i), $\lambda \neq \mu$ are obtained by adding the two cells in either order. In cases (ii)-(iii), $\lambda = \mu$ is obtained by adding the first of the two cells. Only in case (iii) does sorting occur, and in this case the two cells appear in rows $i, i+1$.
\end{remark}

\begin{definition}
  The \textit{promotion-evacuation diagram} of a rectangular standard tableau
    \[ T = \varnothing \to \lambda^1 \to \cdots \to \lambda^{n-1} \to r \times c \]
  is obtained by iteratively applying local rules to fill in the parallelogram
  \begin{equation*}
  \PEdiagram(T) \coloneqq
  \begin{tikzcd}[column sep=scriptsize, scale cd=0.8]
    \varnothing \rar
      & \lambda^1 \rar
      & \cdots \rar
      & \lambda^{n-1} \rar
      & r \times c \\
    \ 
      & \varnothing \rar[dashed]{} \uar
      & \cdots \rar[dashed]{} \uar[dashed]{}
      & \cdot \rar[dashed]{} \uar[dashed]{}
      & \cdot \rar[dashed]{} \uar[dashed]{}
      & r \times c \\
    \ 
      & \ 
      & \ddots \rar[dashed]{} \uar{}
      & \vdots \rar[dashed]{} \uar[dashed]{}
      & \vdots \rar[dashed]{} \uar[dashed]{}
      & \cdot \rar[dashed]{} \uar[dashed]{}
      & \ddots \\
    \ 
      & \ 
      & \ 
      & \varnothing \rar[dashed]{} \uar{}
      & \cdot \rar[dashed]{} \uar[dashed]{}
      & \cdots \rar[dashed]{} \uar[dashed]{}
      & \cdot \rar[dashed]{} \uar[dashed]{}
      & r \times c \\
    \ 
      & \ 
      & \ 
      & \ 
      & \varnothing \rar[dashed]{} \uar{}
      & \cdot \rar[dashed]{} \uar[dashed]{}
      & \cdots \rar[dashed]{} \uar[dashed]{}
      & \cdot \rar[dashed]{} \uar[dashed]{}
      & r \times c \\
  \end{tikzcd}
  \end{equation*}
  starting with the given first row and working your way from upper left to lower right.
\end{definition}

\begin{remark}\label{rem:PEdiagram-pieces}
    The $j$-th row of $\PEdiagram(T)$ encodes $\promotion^j(T)$, starting with $j=0$ at the top. The $n$-th column encodes $\evacuation(T)$ when read bottom-to-top. Since $\promotion^n(T) = T$, both the first and last rows encode $T$.
\end{remark}

\begin{example}\label{ex:PEdiagram}
   The decorated promotion-evacuation diagram of $T = \vcenter{\hbox{\ytableaushort{13,25,46}}}$ from \Cref{ex:prom-dihedral} is
  \[
  \setlength\arraycolsep{2pt}
  \begin{tikzcd}[ampersand replacement=\&,column sep=tiny,row sep=tiny,scale cd=0.67]
    \varnothing \rar[""{name=L00r}]
      \& \ytableaushort{\ } \rar[""{name=L01r}]
      \& \ytableaushort{\ ,\ } \rar[""{name=L02r}]
      \& \ytableaushort{\ \ ,\ } \rar[""{name=L03r}]
      \& \ytableaushort{\ \ ,\ ,\ } \rar[""{name=L04r}]
      \& \ytableaushort{\ \ ,\ \ ,\ } \rar[""{name=L05r}]
      \& \ytableaushort{\ \ ,\ \ ,\ \ } \rar[phantom,""{name=L06r}]
      \& \ \\
    \ \rar[phantom,""{name=L10r}]
      \& \varnothing \rar[""{name=L11r}] \uar
      \& \ytableaushort{\ } \rar[""{name=L12r}] \uar
      \& \ytableaushort{\ \ } \rar[""{name=L13r}] \uar
      \& \ytableaushort{\ \ ,\ } \rar[""{name=L14r}] \uar
      \& \ytableaushort{\ \ ,\ \ } \rar[""{name=L15r}] \uar
      \& \ytableaushort{\ \ ,\ \ ,\ } \rar[""{name=L16r}] \uar
      \& \ytableaushort{\ \ ,\ \ ,\ \ } \rar[phantom,""{name=L17r}]
      \& \ \\
    \ 
      \& \ \rar[phantom,""{name=L21r}]
      \& \varnothing \rar[""{name=L22r}] \uar
      \& \ytableaushort{\ } \rar[""{name=L23r}] \uar
      \& \ytableaushort{\ ,\ } \rar[""{name=L24r}] \uar
      \& \ytableaushort{\ \ ,\ } \rar[""{name=L25r}] \uar
      \& \ytableaushort{\ \ ,\ ,\ } \rar[""{name=L26r}] \uar
      \& \ytableaushort{\ \ ,\ \ ,\ } \rar[""{name=L27r}] \uar
      \& \ytableaushort{\ \ ,\ \ ,\ \ } \rar[phantom,""{name=L28r}]
      \& \ \\
    \ 
      \& \ 
      \& \ \rar[phantom,""{name=L32r}]
      \& \varnothing \rar[""{name=L33r}] \uar
      \& \ytableaushort{\ } \rar[""{name=L34r}] \uar
      \& \ytableaushort{\ \ } \rar[""{name=L35r}] \uar
      \& \ytableaushort{\ \ ,\ } \rar[""{name=L36r}] \uar
      \& \ytableaushort{\ \ ,\ \ } \rar[""{name=L37r}] \uar
      \& \ytableaushort{\ \ ,\ \ ,\ } \rar[""{name=L38r}] \uar
      \& \ytableaushort{\ \ ,\ \ ,\ \ } \rar[phantom,""{name=L39r}]
      \& \ \\
    \ 
      \& \ 
      \& \ 
      \& \ \rar[phantom,""{name=L43r}]
      \& \varnothing \rar[""{name=L44r}] \uar
      \& \ytableaushort{\ } \rar[""{name=L45r}] \uar
      \& \ytableaushort{\ ,\ } \rar[""{name=L46r}] \uar
      \& \ytableaushort{\ \ ,\ } \rar[""{name=L47r}] \uar
      \& \ytableaushort{\ \ ,\ ,\ } \rar[""{name=L48r}] \uar
      \& \ytableaushort{\ \ ,\ \ ,\ } \rar[""{name=L49r}] \uar
      \& \ytableaushort{\ \ ,\ \ ,\ \ } \rar[phantom,""{name=L4Ar}]
      \& \ \\
    \ 
      \& \ 
      \& \ 
      \& \ 
      \& \ \rar[phantom,""{name=L54r}]
      \& \varnothing \rar[""{name=L55r}] \uar
      \& \ytableaushort{\ } \rar[""{name=L56r}] \uar
      \& \ytableaushort{\ \ } \rar[""{name=L57r}] \uar
      \& \ytableaushort{\ \ ,\ } \rar[""{name=L58r}] \uar
      \& \ytableaushort{\ \ ,\ \ } \rar[""{name=L59r}] \uar
      \& \ytableaushort{\ \ ,\ \ ,\ } \rar[""{name=L5Ar}] \uar
      \& \ytableaushort{\ \ ,\ \ ,\ \ } \rar[phantom,""{name=L5Br}]
      \& \ \\
    \ 
      \& \ 
      \& \ 
      \& \ 
      \& \ 
      \& \ \rar[phantom,""{name=L65r}]
      \& \varnothing \rar[""{name=L66r}] \uar
      \& \ytableaushort{\ } \rar[""{name=L67r}] \uar
      \& \ytableaushort{\ ,\ } \rar[""{name=L68r}] \uar
      \& \ytableaushort{\ \ ,\ } \rar[""{name=L69r}] \uar
      \& \ytableaushort{\ \ ,\ ,\ } \rar[""{name=L6Ar}] \uar
      \& \ytableaushort{\ \ ,\ \ ,\ } \rar[""{name=L6Br}] \uar
      \& \ytableaushort{\ \ ,\ \ ,\ \ }.
      \arrow[phantom,from=L00r,to=L10r,"{}"]
      \arrow[phantom,from=L01r,to=L11r,"{1}"]
      \arrow[phantom,from=L02r,to=L12r,"{}"]
      \arrow[phantom,from=L03r,to=L13r,"{2}"]
      \arrow[phantom,from=L04r,to=L14r,"{}"]
      \arrow[phantom,from=L05r,to=L15r,"{}"]
      \arrow[phantom,from=L06r,to=L16r,"{}"]
      \arrow[phantom,from=L11r,to=L21r,"{}"]
      \arrow[phantom,from=L12r,to=L22r,"{}"]
      \arrow[phantom,from=L13r,to=L23r,"{}"]
      \arrow[phantom,from=L14r,to=L24r,"{1}"]
      \arrow[phantom,from=L15r,to=L25r,"{}"]
      \arrow[phantom,from=L16r,to=L26r,"{2}"]
      \arrow[phantom,from=L17r,to=L27r,"{}"]
      \arrow[phantom,from=L22r,to=L32r,"{}"]
      \arrow[phantom,from=L23r,to=L33r,"{1}"]
      \arrow[phantom,from=L24r,to=L34r,"{}"]
      \arrow[phantom,from=L25r,to=L35r,"{2}"]
      \arrow[phantom,from=L26r,to=L36r,"{}"]
      \arrow[phantom,from=L27r,to=L37r,"{}"]
      \arrow[phantom,from=L28r,to=L38r,"{}"]
      \arrow[phantom,from=L33r,to=L43r,"{}"]
      \arrow[phantom,from=L34r,to=L44r,"{}"]
      \arrow[phantom,from=L35r,to=L45r,"{}"]
      \arrow[phantom,from=L36r,to=L46r,"{1}"]
      \arrow[phantom,from=L37r,to=L47r,"{}"]
      \arrow[phantom,from=L38r,to=L48r,"{2}"]
      \arrow[phantom,from=L39r,to=L49r,"{}"]
      \arrow[phantom,from=L44r,to=L54r,"{}"]
      \arrow[phantom,from=L45r,to=L55r,"{1}"]
      \arrow[phantom,from=L46r,to=L56r,"{}"]
      \arrow[phantom,from=L47r,to=L57r,"{2}"]
      \arrow[phantom,from=L48r,to=L58r,"{}"]
      \arrow[phantom,from=L49r,to=L59r,"{}"]
      \arrow[phantom,from=L4Ar,to=L5Ar,"{}"]
      \arrow[phantom,from=L55r,to=L65r,"{}"]
      \arrow[phantom,from=L56r,to=L66r,"{}"]
      \arrow[phantom,from=L57r,to=L67r,"{}"]
      \arrow[phantom,from=L58r,to=L68r,"{1}"]
      \arrow[phantom,from=L59r,to=L69r,"{}"]
      \arrow[phantom,from=L5Ar,to=L6Ar,"{2}"]
      \arrow[phantom,from=L5Br,to=L6Br,"{}"]
  \end{tikzcd}
  \]
  A decoration $i$ appears precisely when the corresponding southwest and northeast partitions differ by two adjacent cells in rows $i, i+1$. We again see that this example satisfies $\promotion^2(T) = T$.
\end{example}

\begin{definition}
  Let $n = rc$. The \emph{promotion matrix} of $T \in \SYT(r \times c)$ is the $n \times n$ matrix $\PM(T)$ obtained as follows. Place the decorations of $\PEdiagram(T)$ in a matrix by wrapping columns around modulo $n$, where entries without decorations are $0$, and each row has an initial $0$.
\end{definition}

\begin{example}\label{ex:prom-matrix}
  Continuing \Cref{ex:PEdiagram},
  \[
  \PM\left(\,\vcenter{\hbox{\ytableaushort{13,25,46}}}\,\right)
  =
  \begin{bmatrix}
    \cdot & 1 & \cdot & 2 & \cdot & \cdot \\
    2 & \cdot & \cdot & \cdot & 1 & \cdot \\
    \cdot & \cdot & \cdot & 1 & \cdot & 2 \\
    1 & \cdot & 2 & \cdot & \cdot & \cdot \\
    \cdot & 2 & \cdot & \cdot & \cdot & 1 \\
    \cdot & \cdot & 1 & \cdot & 2 & \cdot \\
  \end{bmatrix}\]
  Here we write $\cdot$ instead of $0$ for visual clarity.
\end{example}

\begin{remark}\label{rem:prom-structure}
    For $T \in \SYT(r \times c)$, the promotion matrix $\PM(T)$ is highly structured.
    \begin{itemize}
        \item The $n \coloneqq rc$ entries on the main diagonal are zero.
        \item Every row and column has entries $1 \cdots (r-1)$ listed in increasing order when reading cyclically going east or north starting from the main diagonal and skipping $0$'s.
        \item An entry $i$ has an entry $r-i$ in the transposed position.
    \end{itemize}
\end{remark}

Promotion matrices directly encode all promotion permutations simultaneously. In order to be consistent with promotion-evacuation diagrams, we consider a permutation $\rho \in \mathfrak{S}_n$ as the $n \times n$ matrix $M$ with $M_{i,\rho(i)} = 1$ for $1 \leq i \leq n$ and $0$'s elsewhere.

\begin{proposition}[{\cite[Prop.~5.19]{fluctuating-paper}}]\label{prop:prom-matrix}
    Let $T \in \SYT(r \times c)$. Then
      \[ \PM(T) = \sum_{i=1}^{r-1} i \prom_i(T). \]
\end{proposition}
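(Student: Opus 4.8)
The plan is to prove the identity entrywise, reducing it to a single statement that matches each decoration of $\PEdiagram(T)$ with one value of a promotion permutation. First I would invoke \Cref{rem:prom-structure}: since every row and every column of $\PM(T)$ contains each of $1, \ldots, r-1$ exactly once, we may write $\PM(T) = \sum_{i=1}^{r-1} i\,P_i$, where $P_i$ is the $0/1$ matrix recording the cells carrying decoration $i$, and each $P_i$ is a permutation matrix. On the other side, $\bigl(\sum_{i} i\,\prom_i(T)\bigr)_{j,k}$ equals the unique $i$ with $\prom_i(T)(j) = k$ (and $0$ if there is none); this is well defined because the $r-1$ entries sliding into rows $1, \ldots, r-1$ during a single application of gromotion carry pairwise distinct labels lying in a window of $n-1$ consecutive integers, hence pairwise distinct residues modulo $n$, so at most one $i$ occurs for each pair $(j,k)$. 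Thus it suffices to prove $P_i = \prom_i(T)$ for each $i$, i.e.\ that $\PM(T)_{j,k} = i$ if and only if $\prom_i(T)(j) = k$.

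To establish this I would identify the strip of local-rule squares between rows $j-1$ and $j$ of $\PEdiagram(T)$ with a single jeu de taquin computation. By \Cref{rem:PEdiagram-pieces} these rows encode $\promotion^{j-1}(T)$ and $\promotion^{j}(T)$, so the strip realizes one promotion step, which is the same slide sequence as the $j$-th gromotion of $T$ up to the global relabeling distinguishing promotion from gromotion. Writing the two intrinsic growth sequences as $\mu^0 \to \cdots \to \mu^n$ (top) and $\nu^0 \to \cdots \to \nu^n$ (bottom), the key lemma is that each difference $\mu^{\ell+1} \setminus \nu^{\ell}$ is a single cell and that, as $\ell$ increases, these cells trace the path of the vacated cell (``hole'') through the slide. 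Under this correspondence a downward step of the hole from row $i$ to row $i+1$ is exactly an entry sliding up into row $i$ in the sense of \Cref{def:prom-perms}, and by \Cref{rem:sorting-cases}(iii) it is recorded precisely by a square whose northeast and southwest corners differ by a vertical domino in rows $i, i+1$, that is, by the decoration $i$. This matches the decoration value with the target row index $i$.

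It then remains to match the column index. Here I would track the label of the sliding entry: an entry with standard label $m$ in $\promotion^{j-1}(T)$ has gromotion label $m + (j-1)$, and one checks that the square recording its upward slide has its distinguished (northeast) corner at growth index $m$, hence absolute column position $m + (j-1)$. By the construction of the promotion matrix — placing decorations by wrapping columns modulo $n$ — this lands in matrix column $(m + (j-1)) \bmod n$, which is exactly the residue $\prom_i(T)(j)$ prescribed by \Cref{def:prom-perms}. Combining this with the previous paragraph gives $\PM(T)_{j,k} = i \iff \prom_i(T)(j) = k$, and summing over $i$ yields the claimed identity.

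The main obstacle is the key lemma of the second paragraph together with the bookkeeping of the third: rigorously proving that the single-cell differences of the two growth sequences trace the jeu de taquin hole path (so that vertical dominoes correspond to upward slides into a prescribed row), and then pinning down, through the staggered geometry of $\PEdiagram(T)$ and the $+(j-1)$ relabeling, that each recording square's absolute column equals the sliding entry's gromotion label modulo $n$. Everything else is formal, given \Cref{rem:prom-structure} and the interpretation of the diagram's rows furnished by \Cref{rem:PEdiagram-pieces}.
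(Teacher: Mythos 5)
The paper offers no proof of \Cref{prop:prom-matrix} to compare against: it is imported verbatim from \cite[Prop.~5.19]{fluctuating-paper}, where it amounts to the equivalence of the sliding definition (\Cref{def:prom-perms}) with the growth-diagram definition of promotion permutations. Your sketch is a correct reconstruction of that equivalence, and it identifies the right key lemma: by \Cref{rem:PEdiagram-pieces} the strip between rows $j-1$ and $j$ of $\PEdiagram(T)$ realizes one promotion (equivalently gromotion) step, and the single-cell differences $\mu^{\ell+1}\setminus\nu^{\ell}$ of the two staggered growth sequences trace the jeu de taquin hole, so a vertical domino in rows $i,i+1$ (case (iii) of \Cref{rem:sorting-cases}) records exactly an upward slide into row $i$; this van Leeuwen--style statement is provable by induction on $\ell$ from \eqref{eq:local_rule.sort}. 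Your column bookkeeping also checks out: the decorated square's northeast corner has growth index $m$ equal to the sliding entry's label in $\promotion^{j-1}(T)$, sits at absolute column $(j-1)+m$ of the diagram, hence lands in matrix column $(m+j-1)\bmod n$, which is the gromotion label $m+(j-1)$ reduced mod $n$, i.e.\ $\prom_i(T)(j)$ (this can be verified against \Cref{ex:PEdiagram} and \Cref{ex:prom-matrix}, including the forced diagonal zero as the ``initial $0$''). One caution: your opening appeal to \Cref{rem:prom-structure} is a potential circularity, since in this paper that remark is stated without proof and its content is most naturally \emph{deduced from} the proposition you are proving together with \Cref{thm:prom-perm-symmetries}; happily it is also unnecessary, because your entrywise biconditional $\PM(T)_{j,k}=i \iff \prom_i(T)(j)=k$, combined with your (correct) window argument that distinct sliding entries in $\{j+1,\ldots,j+n-1\}$ have distinct residues mod $n$, already yields the identity --- with the fact that each $\prom_i(T)$ is a genuine permutation supplied by \Cref{thm:prom-perm-symmetries}(a), or directly from the observation that for rectangular shape the hole descends through every row exactly once per slide. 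With the key lemma written out, your plan would constitute a complete proof along essentially the same lines as the cited source.
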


\begin{example}
    When $T = 13/25/46$, by \Cref{ex:prom-matrix} and \Cref{prop:prom-matrix}, $\prom_1(T) = [2,5,4,1,6,3]$ and $\prom_2(t) = [4,1,6,3,2,5]$, as in \Cref{ex:prom-dihedral}. The $24 \times 24$ promotion matrix associated with $S = \stack(P, Q)$ from \Cref{ex:prom-RS} is in \Cref{ex:main} below.
\end{example}

Finding a direct characterization of which matrices are promotion matrices is an open problem \cite[Rem.~6.15]{fluctuating-paper}.

\begin{problem}\label{prob:prom-perms}
    Give a direct combinatorial characterization of which tuples $(\rho_1, \ldots, \rho_{r-1}) \in \mathfrak{S}_n^{r-1}$ are of the form $\prom_\bullet(T)$ for some $T \in \SYT(r \times c)$, where $n \coloneqq rc$.
\end{problem}

\noindent When $r=2$, they correspond precisely to non-crossing perfect matchings by standard Catalan combinatorics. \Cref{cor:prom-RS} provides a partial answer to \Cref{prob:prom-perms} in a very special case but for arbitrary even $r$.

\subsection{Existing work and Fomin's local rules}\label{sec:background-Fomin}

We briefly compare our construction in \Cref{sec:background-local-rules} to existing work. This material is not required for what follows, but it places our work in context.

Fomin \cite{Fomin-RSK} introduced growth diagrams and a version of local rules to compute the Robinson--Schensted--Knuth correspondence. See, e.g., \cite[Fig.~10]{Krattenthaler-growth-diagrams} for an example. Fomin's local rules, as laid out in \cite{Krattenthaler-growth-diagrams}, allow edges $\lambda \to \nu$ which add either $0$ or $1$ boxes and include an optional decoration $\times$. Moreover, Fomin's local rules work perpendicularly to those described in \Cref{def:local-rules}, namely they fill in the northeast corner $\nu$ (or the southwest corner $\kappa$) given the other three partitions and the decoration. In fact, Fomin's local rules may be written in terms of sorting very similarly to \Cref{def:local-rules}, which was observed by Pfannerer--Rubey--Westbury \cite[Fig.~3]{Pfannerer-Rubey-Westbury}. This style of sorting rule was originally observed by van Leeuwen \cite[Rule~4.1.1]{vanLeeuwen}.

Various authors have used constructions similar to or generalizing Fomin's local rules to give analogues of the RS correspondence. See, e.g., \cite{Krattenthaler-growth-diagrams} for growth diagrams of arbitrary Ferrers shape, work including the first author (which motivated the present work) in \cite{Papper.Pfannerer.Schilling.Simone,Pfannerer-Rubey-Westbury} for $r$-fans of Dyck paths and oscillating tableaux, respectively, and \cite{Akhmejanov} which gives a refinement of the classic type $A$ version and an interpretation in terms of the geometric Satake correspondence.

We note that \Cref{thm:prom-RS} does not follow from this existing work and is in some sense perpendicular to it. In addition to the direction local rules are applied, one key difference in the promotion permutations of \cite{fluctuating-paper} is the use of multiple decorations rather than a single decoration. Instead of using Fomin's local rules and the resulting construction of RS, ours uses the local rules in \Cref{def:local-rules} and, by \Cref{thm:main}, encodes Viennot's geometric shadow line construction.

\subsection{Crossing and nesting numbers}

Let $M$ be a \emph{perfect matching} on $[2n]$, i.e., a set partition whose blocks are all of size $2$. Equivalently, we may think of $M$ as a fixed-point free involution $\rho$ in $\mathfrak{S}_{2n}$ where the cycles of $\rho$ are given by the blocks of $M$. An example is depicted in \Cref{fig:matching-alt}.

\begin{definition}[{See, e.g., \cite[\S3]{Krattenthaler-growth-diagrams} and the citations therein}]
    \ 
    \begin{itemize}
        \item A \emph{$k$-crossing} of $M$ is a collection of $k$ blocks $\{i_1 < j_1\}, \ldots, \{i_k < j_k\}$ such that $i_1 < \cdots < i_k < j_1 < \cdots < j_k$. Pictorially, in the corresponding chord diagram all $k$ chords must cross pairwise. The \emph{crossing number} of $M$ is the maximum $k$ for which $M$ has a $k$-crossing.
        \item A \emph{$k$-nesting} of $M$ is a collection of $k$ blocks $\{i_1 < j_1\}, \ldots, \{i_k < j_k\}$ such that $i_1 < \cdots < i_k < j_k < \cdots < j_1$. Pictorially, in the corresponding chord diagram, the $k$ chords nest one inside the next. The \emph{nesting number} of $M$ is the maximum $k$ for which $M$ has a $k$-nesting.
    \end{itemize}
\end{definition}

%
%
%

\subsection{Viennot's construction}\label{sec:background-Viennot}
We now recall Viennot's \emph{geometric shadow line construction} for the Robinson--Schensted correspondence (see \cite{Viennot-geometric}, \cite[\S3.6]{Sagan-symmetric-group}). One slightly unusual feature of our exposition is that we must state the construction and its relation to the RS correspondence in the generality of light shining from any corner. We also note that our convention for permutation matrices in \Cref{sec:background-local-rules} differs from the common convention used in \cite[\S3.6]{Sagan-symmetric-group}.

Define the following partial orders on the plane $\mathbb{Z} \times \mathbb{Z}$:
\begin{multicols}{2}
\begin{itemize}
    \item $(a, b) \nearrow (x, y)$ if and only if $a \leq x$ and $b \leq y$
    \item $(a, b) \swarrow (x, y)$ if and only if $a \geq x$ and $b \geq y$
    \item $(a, b) \searrow (x, y)$ if and only if $a \leq x$ and $b \geq y$
    \item $(a, b) \nwarrow (x, y)$ if and only if $a \geq x$ and $b \leq y$.
\end{itemize}
\end{multicols}
\noindent To understand $\nearrow$ pictorially, we imagine there is a bright light shining to the northeast in such a way that $(a, b)$ casts a shadow which covers all points weakly north or east of $(a, b)$. Then $(a, b) \nearrow (x, y)$ if and only if $(x, y)$ is in the shadow of $(a, b)$. The rest are similar.

In what follows, suppose $\mathcal{P} = \{(x_1, y_1), \ldots, (x_k, y_k)\}$ is a collection of points in $\mathbb{Z} \times \mathbb{Z}$, where $x_i \neq x_j$ and $y_i \neq y_j$ for $i \neq j$. In the next two definitions, we assume our light shines in the $\nearrow$ direction for concreteness, though the rest are verbatim identical upon replacing the symbol $\nearrow$ with $\searrow, \swarrow, \nwarrow$.

\begin{definition}
    The \emph{first shadow line} $\mathcal{L}_1(\mathcal{P}, \nearrow)$ is the set of minimal elements of $\mathcal{P}$ under $\nearrow$. Pictorially, we identify this with the border of the region between light and shadow cast by $\mathcal{P}$ with respect to $\nearrow$; see \Cref{fig:Viennot-construction} (left). Inductively, the \emph{$j$-th shadow line}, $\mathcal{L}_j(\mathcal{P}, \nearrow)$, is the set of minimal elements of $\mathcal{P} - \cup_{k=1}^{j-1} \mathcal{L}_j(\mathcal{P}, \nearrow)$ under $\nearrow$.
\end{definition}

\begin{definition}
    The \emph{skeleton} of a shadow line $\mathcal{L}$ with respect to $\nearrow$, written $\mathcal{S}(\mathcal{L}, \nearrow)$, is the set of minimal elements under $\nearrow$ of the region consisting of all points $(x, y)$ covered by at least two shadows from $\mathcal{L}$. If $\mathcal{P}$ has shadow lines $\mathcal{L}_1, \mathcal{L}_2, \ldots$, the \emph{skeleton} of $\mathcal{P}$ is $\mathcal{S}(\mathcal{P}, \nearrow) \coloneqq \cup_i \mathcal{S}(\mathcal{L}_i, \nearrow)$; see \Cref{fig:Viennot-construction} (left). The \emph{$i$-th skeleton} of $\mathcal{P}$ is defined recursively by $\mathcal{S}^0(\mathcal{P}, \nearrow) = \mathcal{P}$ and $\mathcal{S}^{i+1}(\mathcal{P}, \nearrow) = \mathcal{S}(\mathcal{S}^i(\mathcal{P}, \nearrow), \nearrow)$; see \Cref{fig:Viennot-construction} (right).
\end{definition}

\begin{example}
    Let $\mathcal{P} = \mathcal{P}^0 = \{(1, 6), (2, 2), (3, 4), (4, 5), (5, 3), (6, 1), (7, 7)\}$, which are the filled points in \Cref{fig:Viennot-construction} (left). That figure depicts the four shadow lines \[\{\mathcal{L}_j(\mathcal{P}, \nearrow)\} =  \{\{(1, 6), (2, 2), (6, 1)\}, \{(3, 3)\}, \{(4, 5), (5, 4)\}, \{(7, 7)\}\}.\] The skeleton of $\mathcal{P}$ consists of the unfilled points, $\mathcal{S}^1(\mathcal{P}, \nearrow) = \{(2, 6), (5, 4), (6, 2)\}$. Computing further skeleta gives $\mathcal{S}^2(\mathcal{P}, \nearrow) = \{(5, 6), (6, 5)\}$ and $\mathcal{S}^3(\mathcal{P}, \nearrow) = \{(6, 6)\}$. All shadow lines and all $i$-skeleta are drawn together in \Cref{fig:Viennot-construction} (right), using different colors for each $i$, where shadow lines associated with the $i$-skeleton are labeled with $i+1$.
\end{example}

\begin{figure}[hbtp]
    \noindent
    \resizebox{\textwidth}{!}{%
    \begin{tikzpicture}[>=stealth]

    \begin{scope}[xshift=0cm]

    \foreach \x in {1,...,7}{
        \draw[line width=0.6pt] (\x,0) -- (\x,8);
        \node[below] at (\x,0) {\small \x};
    }
    \foreach \y in {1,...,7}{
        \draw[line width=0.6pt] (0,\y) -- (8,\y);
        \node[left] at (0,\y) {\small \y};
    }

    \draw[->, line width=1.5pt] (-0.4,-0.4) -- (0.4,0.4);

    \fill[lightgray, opacity=0.4] 
        (1,8) -- (1,6) -- (2,6) -- (2,2) -- (6,2) -- (6,1) -- (8,1) --
        (8,8) -- cycle;

    \foreach \x/\y in {1/6,2/2,3/4,4/5,5/3,6/1,7/7}{
        \fill (\x,\y) circle (6pt);
    }

    \foreach \x/\y in {2/6,5/4,6/2}{
        \draw[line width=1pt] (\x,\y) circle (6pt);
    }

    \draw[line width=3pt] (1,8) -- (1,6) -- (2,6) -- (2,2) -- (6,2) -- (6,1) -- (8,1);
    \draw[line width=3pt] (3,8) -- (3,4) -- (5,4) -- (5,3) -- (8,3);
    \draw[line width=3pt] (4,8) -- (4,5) -- (8,5);
    \draw[line width=3pt] (8,7) -- (7,7) -- (7,8);

    \end{scope}

    \begin{scope}[xshift=11cm]

    \foreach \x in {1,...,7}{
        \draw[line width=0.6pt] (\x,0) -- (\x,8);
        \node[below] at (\x,0) {\small \x};
    }
    \foreach \y in {1,...,7}{
        \draw[line width=0.6pt] (0,\y) -- (8,\y);
        \node[left] at (0,\y) {\small \y};
    }

    \draw[->, line width=1.5pt] (-0.4,-0.4) -- (0.4,0.4);

    \draw[line width=3pt, customred] (1,8) -- (1,6) -- (2,6) -- (2,2) -- (6,2) -- (6,1) -- (8,1)
        node[above] at (1,8) {1} node[right] {1};
    \draw[line width=3pt, customred] (3,8) -- (3,4) -- (5,4) -- (5,3) -- (8,3)
        node[above] at (3,8) {1} node[right] {1};
    \draw[line width=3pt, customred] (4,8) -- (4,5) -- (8,5)
        node[above] at (4,8) {1} node[right] {1};
    \draw[line width=3pt, customred] (7,8) -- (7,7) -- (8,7)
        node[above] at (7,8) {1} node[right] {1};

    \draw[line width=3pt, orange] (2,8) -- (2,6) -- (5,6) -- (5,4) -- (6,4) -- (6,2) -- (8,2)
        node[above] at (2,8) {2} node[right] {2};

    \draw[line width=3pt, green!70!black] (5,8) -- (5,6) -- (6,6) -- (6,4) -- (8,4)
        node[above] at (5,8) {3} node[right] {3};

    \draw[line width=3pt, blue] (6,8) -- (6,6) -- (8,6)
        node[above] at (6,8) {4} node[right] {4};

    \node[right] at (9,4) {$P = \ytableaushort{1357,2,4,6}$};
    \node[above] at (4,9) {$Q = \ytableaushort{1347,2,5,6}$};

    \end{scope}

    \end{tikzpicture}%
    }
    \caption{(\textsc{Left}) A collection $\mathcal{P}$ depicted as filled points, together with the shadows cast with respect to light shining $\nearrow$. The shadow lines partition $\mathcal{P}$ into four pieces. The skeleton $\mathcal{S}(\mathcal{P}, \nearrow)$ is depicted as unfilled circles. (\textsc{Right}) The union of the shadow lines for all skeleta of $\mathcal{P}$ drawn simultaneously. Each $i$-skeleton $\mathcal{S}^i$ for $0 \leq i \leq 3$ is in a different color and is labeled at the right and top with $i+1$. Reading these labels gives lattice words $w_P = 1213141$, $w_Q = 1211341$. These words encode successive row indexes of the indicated RS tableaux $(P, Q)$, which corresponds under RS to the permutation $\rho = [6, 2, 4, 5, 3, 1, 7]$.}
    \label{fig:Viennot-construction}
\end{figure}
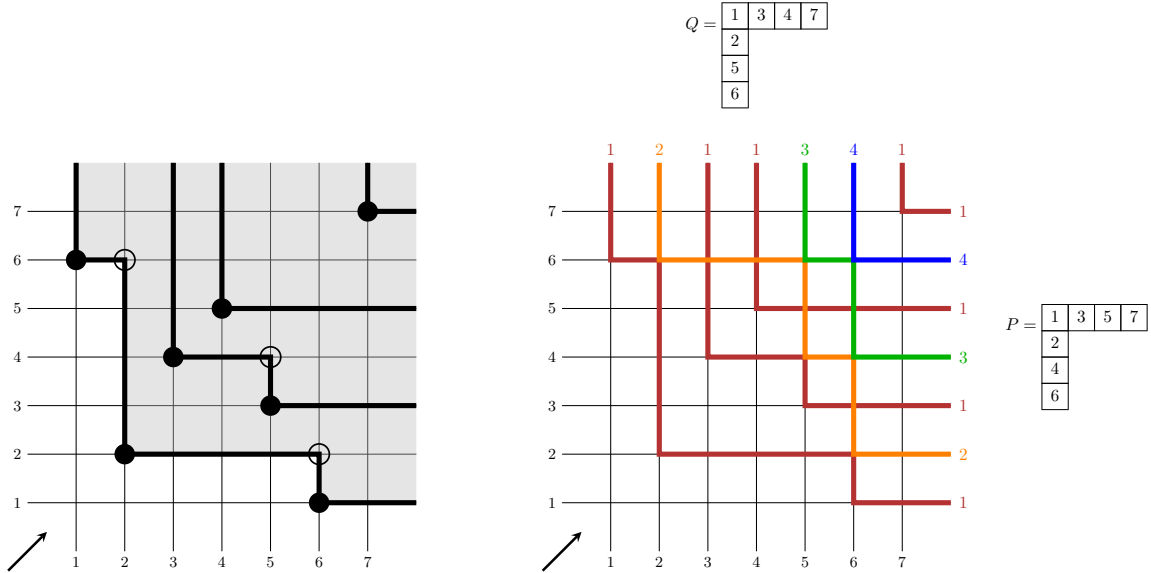

Recall that the \emph{lattice word} of a standard tableau $T \in \SYT(\lambda)$ is the word $L(T) = w_1 \cdots w_n$ where $w_i=r$ if the entry $i$ in $T$ appears in row $r$. See, e.g., \cite[\S2.4, \S8]{fluctuating-paper} for more information.

\begin{definition}
    Consider the union of the shadow lines of all the $i$-skeleta $\mathcal{P} = \mathcal{S}^0(\mathcal{P}, \nearrow), \mathcal{S}^1(\mathcal{P}, \nearrow), \ldots$ with respect to $\nearrow$. Label shadow lines coming from the $i$-skeleton with $i+1$ and let $w_P(\mathcal{P}, \nearrow)$ be the word obtained by reading the sequence of numbers along the extreme right edge of the union from bottom to top; see \Cref{fig:Viennot-construction} (right). Let $w_Q(\mathcal{P}, \nearrow)$ be the word obtained similarly by reading the extreme top edge of the union from left to right. It is not difficult to see that $w_P$ and $w_Q$ are lattice words, hence they correspond to some standard tableaux $P, Q$, respectively. In this case, write the output of Viennot's construction as $\mathcal{V}(\mathcal{P}, \nearrow) = (P, Q)$.
\end{definition}

The main result concerning Viennot's construction is that it computes the RS correspondence:

\begin{theorem}[{\cite{Viennot-geometric}}]\label{thm:Viennot-basic}
    Let $\rho \in \mathfrak{S}_n$ and let $\mathcal{P} = \{(i, \rho_i) \mid 1 \leq i \leq n\}$. Then $\RS(\rho) = \mathcal{V}(\mathcal{P}, \nearrow)$.
\end{theorem}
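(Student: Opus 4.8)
The plan is to prove Viennot's theorem by induction on the number of rows of the common shape $\lambda$ of the pair $\RS(\rho) = (P, Q)$, peeling off one row at a time via the skeleton operation. The whole argument rests on two assertions: that the first shadow lines of a point set read off the first rows of its $\RS$ image, and that passing to the skeleton corresponds exactly to deleting those first rows.

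First I would prove the first-row statement. For a point set $\mathcal{P} = \{(i, \rho_i)\}$ with light shining $\nearrow$, a point lies on the first shadow line $\mathcal{L}_1(\mathcal{P}, \nearrow)$ precisely when it is a left-to-right minimum of $\rho$, and the higher shadow lines $\mathcal{L}_2, \mathcal{L}_3, \ldots$ are governed by the same minimality after successive deletion. I would check directly that the $k$-th shadow line tracks the successive values occupying the $k$-th cell of the first row of the partial insertion tableau as $\rho_1, \rho_2, \ldots$ are inserted one at a time: appending $\rho_i$ to the first row starts a new shadow line, while bumping the smallest larger entry continues an existing one. Consequently the smallest $y$-coordinate reached by $\mathcal{L}_k$ is the $k$-th entry of the first row of $P$, and the smallest $x$-coordinate at which $\mathcal{L}_k$ begins is the $k$-th entry of the first row of $Q$; the example computation of $\RS([6,2,4,5,3,1,7])$ against \Cref{fig:Viennot-construction} confirms the indexing.

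Next I would prove that the skeleton strips the first row, which is the heart of the matter. The claim is that $\mathcal{S}(\mathcal{P}, \nearrow)$ is the point set of a (partial) permutation whose $\RS$ image is $(P^-, Q^-)$, where $P^-, Q^-$ are $P, Q$ with their first rows removed. Combinatorially I would identify each skeleton point with a bumping event: if inserting $\rho_i$ bumps the value $v$ out of the first row, this produces the point $(i, v)$, so that the bumped value becomes the new $y$-coordinate — exactly what is inserted into the second row — and the insertion time $i$ becomes the new $x$-coordinate. Geometrically these $(i,v)$ are precisely the inner northeast corners where two shadow lines overlap, i.e., the points of $\mathcal{S}(\mathcal{P}, \nearrow)$. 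The essential technical point is that the $\nearrow$-order on the skeleton coincides with the temporal order in which letters are bumped, so that applying the first-row statement to $\mathcal{S}(\mathcal{P}, \nearrow)$ returns the second rows of $P$ and $Q$, and so on.

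With both assertions in hand, induction finishes the proof. Iterating, $\mathcal{S}^i(\mathcal{P}, \nearrow)$ encodes the $\RS$ image of the bumped sub-permutation equal to $(P, Q)$ with its top $i$ rows deleted. The labeling convention — shadow lines of the $i$-skeleton carry the label $i+1$ — then turns the right-edge reading $w_P(\mathcal{P}, \nearrow)$ and the top-edge reading $w_Q(\mathcal{P}, \nearrow)$ of the union of all skeleta into the lattice words $L(P)$ and $L(Q)$, since a label $i+1$ occurs at a given height or column exactly when the corresponding cell of $P$ or $Q$ lies in row $i+1$. Hence $\mathcal{V}(\mathcal{P}, \nearrow) = (P, Q) = \RS(\rho)$. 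I expect the main obstacle to be the skeleton step: one must verify both that every bumping event yields a skeleton point and conversely, and — more delicately — that the induced $\nearrow$-order on the skeleton matches the order of bumping, which is exactly the compatibility that makes the induction close and requires a careful analysis of how an insertion at position $i$ interacts with the existing staircase of shadow lines.
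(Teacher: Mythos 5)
The paper states this theorem without proof, citing it as a classical result, and your outline is exactly the standard argument from \cite{Viennot-geometric} and \cite[\S 3.6]{Sagan-symmetric-group}: shadow lines track the successive occupants of the first-row cells under row insertion, skeleton points are the bumping events $(i,v)$, and induction on skeleta converts the edge labels into the lattice words of $P$ and $Q$; this is correct. One small simplification: the compatibility you flag as the main obstacle --- that the $\nearrow$-order on the skeleton matches the temporal order of bumping --- is immediate, since the $x$-coordinate of each skeleton point \emph{is} the insertion time of its bump, so distinct skeleton points are automatically ordered by time along the $x$-axis and no further analysis of how an insertion interacts with the staircase is needed.
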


Define $\mathcal{V}(\mathcal{P}, -) = (P, Q)$ with respect to any of the directions $\nearrow, \searrow, \swarrow, \nwarrow$ so that rotating both $\mathcal{P}$ and the direction of the light together preserves $(P, Q)$. Explicitly:
\begin{itemize}
    \item[($\nearrow$)] Read $P$ bottom-to-top along the right edge, $Q$ left-to-right along the top edge;
    \item[($\searrow$)] Read $P$ left-to-right along the bottom edge, $Q$ top-to-bottom along the right edge;
    \item[($\swarrow$)] Read $P$ bottom-to-top along the left edge, $Q$ right-to-left along the bottom edge;
    \item[($\nwarrow$)] Read $P$ right-to-left along the top edge, $Q$ bottom-to-top along the left edge.  
\end{itemize}

We require additional, slightly less well-known symmetries of Viennot's construction and the RS correspondence, which essentially go back to Schensted and Knuth. See \cite[\S4.1]{vanLeeuwen} for a thorough discussion and further references.

\begin{theorem}\label{thm:Viennot-symmetries}
    Let $\mathcal{P}$ be a collection of points as above. The following are equivalent.
    \begin{enumerate}[(i)]
        \item $\mathcal{V}(\mathcal{P}, \nearrow) = (P, Q)$
        \item $\mathcal{V}(\mathcal{P}, \searrow) = (Q^\top, \evacuation(P)^\top)$
        \item $\mathcal{V}(\mathcal{P}, \swarrow) = (\evacuation(P), \evacuation(Q))$
        \item $\mathcal{V}(\mathcal{P}, \nwarrow) = (\evacuation(Q)^\top, P^\top)$.
    \end{enumerate}
\end{theorem}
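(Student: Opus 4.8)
The plan is to reduce every direction to the northeast case, where \Cref{thm:Viennot-basic} applies directly, and then translate the resulting permutation identities into statements about tableaux using the classical symmetries of the RS correspondence. Write $\mathcal{P} = \{(i,\rho_i) : 1 \leq i \leq n\}$ and suppose (i) holds, so that $\RS(\rho) = \mathcal{V}(\mathcal{P}, \nearrow) = (P,Q)$. Since each $\mathcal{V}(\mathcal{P}, -)$ is a single pair determined by $\mathcal{P}$, and since the three assignments $(P,Q) \mapsto (Q^\top, \evacuation(P)^\top)$, $(P,Q)\mapsto(\evacuation(P),\evacuation(Q))$, and $(P,Q)\mapsto(\evacuation(Q)^\top, P^\top)$ are invertible (one recovers $P$ and $Q$ from the image using $\evacuation^2 = \mathrm{id}$ and transposition), equation (ii), (iii), or (iv) determines $(P,Q)$ uniquely, just as (i) does. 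Hence it suffices to prove the three implications (i) $\Rightarrow$ (ii), (i) $\Rightarrow$ (iii), and (i) $\Rightarrow$ (iv); the converses are then immediate.

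The engine is the rotation invariance built into the definition of $\mathcal{V}(\mathcal{P}, -)$: for a rotation $R$ of the plane one has $\mathcal{V}(R\mathcal{P}, R\!\cdot) = \mathcal{V}(\mathcal{P}, \cdot)$, since the four explicit reading conventions were set up precisely to make this hold. Let $w_0 = [n, n-1, \ldots, 1] \in \mathfrak{S}_n$ be the longest element, so that right multiplication by $w_0$ reverses one-line notation and left multiplication complements values. A direct check on coordinates shows that rotating the grid $[n]\times[n]$ by $90^\circ$ counterclockwise, $90^\circ$ clockwise, and $180^\circ$ carries the graph $\{(i,\rho_i)\}$ to the graphs of $\rho^{-1} w_0$, $w_0 \rho^{-1}$, and $w_0 \rho w_0$, respectively, while sending the light directions $\searrow$, $\nwarrow$, $\swarrow$ each to $\nearrow$. (For instance, the $90^\circ$ counterclockwise rotation is $(x,y)\mapsto(n+1-y,x)$, which sends $\{(i,\rho_i)\}$ to $\{(n+1-\rho_i,i)\}$, the graph of $j \mapsto \rho^{-1}(n+1-j) = (\rho^{-1}w_0)(j)$, and rotates the direction $(1,-1)$ to $(1,1)$.) Combining rotation invariance with \Cref{thm:Viennot-basic} therefore yields
\[
\mathcal{V}(\mathcal{P}, \searrow) = \RS(\rho^{-1} w_0), \qquad
\mathcal{V}(\mathcal{P}, \nwarrow) = \RS(w_0 \rho^{-1}), \qquad
\mathcal{V}(\mathcal{P}, \swarrow) = \RS(w_0 \rho w_0).
\]

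It remains to evaluate the three right-hand sides using the classical RS symmetries (see \cite[\S4.1]{vanLeeuwen}): writing $\RS(\sigma) = (P_\sigma, Q_\sigma)$, one has $\RS(\sigma^{-1}) = (Q_\sigma, P_\sigma)$, $\RS(\sigma w_0) = (P_\sigma^\top, \evacuation(Q_\sigma)^\top)$, $\RS(w_0 \sigma) = (\evacuation(P_\sigma)^\top, Q_\sigma^\top)$, and $\RS(w_0 \sigma w_0) = (\evacuation(P_\sigma), \evacuation(Q_\sigma))$. Taking $\sigma = \rho^{-1}$ (so $(P_\sigma, Q_\sigma) = (Q,P)$ by the inverse symmetry) gives $\RS(\rho^{-1}w_0) = (Q^\top, \evacuation(P)^\top)$ and $\RS(w_0\rho^{-1}) = (\evacuation(Q)^\top, P^\top)$, establishing (ii) and (iv); taking $\sigma = \rho$ gives $\RS(w_0\rho w_0) = (\evacuation(P), \evacuation(Q))$, establishing (iii). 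The main obstacle is not conceptual but bookkeeping: one must pin down the orientation conventions (clockwise versus counterclockwise, and which corner each direction's edges are read along) and verify that they match the rotated reading rules, while keeping in mind that this paper's transposed permutation-matrix convention differs from that of \cite[\S3.6]{Sagan-symmetric-group}. The geometric dictionary enters only through $\mathcal{P} = \{(i,\rho_i)\}$ and \Cref{thm:Viennot-basic}, both already fixed in our convention, so the four purely permutation-theoretic RS symmetries transfer verbatim; the identity $\evacuation(T^\top) = \evacuation(T)^\top$ should be recorded for consistency, though it is not strictly needed in the chain above.
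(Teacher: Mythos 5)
Your proof is correct and takes essentially the same approach as the paper: rotate the configuration so each light direction becomes $\nearrow$, identify the rotated point sets with the graphs of $\rho^{-1}w_0$, $w_0\rho^{-1}$, and $w_0\rho w_0$, and finish via \Cref{thm:Viennot-basic} together with the classical Schensted--Sch\"utzenberger symmetries (your explicit treatment of all three rotations simply unwinds the paper's ``repeating this step''). The only nitpick is that you write $\mathcal{P} = \{(i,\rho_i)\}$ without justification, whereas the paper first notes in one sentence that translating $\mathcal{P}$ and removing unused rows and columns does not change $\mathcal{V}$, reducing a general point configuration to the permutation case.
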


\begin{proof}
    Translating $\mathcal{P}$ and removing unused rows and columns does not change the output of $\mathcal{V}$, so we may assume $\mathcal{P}$ comes from a permutation $\rho$ as in \Cref{thm:Viennot-basic}. Let $\mathcal{P}'$ be obtained from $\mathcal{P}$ by rotating counterclockwise $90^\circ$. With the convention for permutations in \Cref{thm:Viennot-basic}, $\mathcal{P}'$ corresponds to the permutation $\rho' = \rho^{-1} w_0$. Hence we have
      \[ \mathcal{V}(\mathcal{P}, \searrow) = \mathcal{V}(\mathcal{P}', \nearrow) = \RS(\rho') = \RS(\rho^{-1} w_0). \]
    Thus (i) $\Leftrightarrow$ (ii) is equivalent to $\RS(\rho) = (P, Q) \Leftrightarrow \RS(\rho^{-1} w_0) = (Q^\top, \evacuation(P)^\top)$. By symmetry, this is equivalent to $\RS(\rho) = (P, Q) \Leftrightarrow \RS(w_0 \rho) = (\evacuation(P)^\top, Q^\top)$, which follows from, e.g.,~\cite[Thm.~A1.2.10]{Stanley:EC2} and \cite[A1.2.11]{Stanley:EC2}. The remaining implications follow by repeating this step and observing that evacuation and tranposition are commuting involutions.
\end{proof}

\section{Main theorem and proof}\label{sec:main}

We now turn to the proof of \Cref{thm:prom-RS}. We in fact state and prove a stronger statement, \Cref{thm:main}, which fully describes the northeast and southwest quadrants of the promotion matrix of a stacked rectangular tableau in terms of the skeleta of the corresponding RS permutations.

\subsection{Main theorem statement}

We first recall and introduce some notation. As in \Cref{sec:background-local-rules}, a promotion permutation $\rho \in \mathfrak{S}_n$ corresponds to the $n \times n$ matrix $M = M(\rho)$ with $M_{i, \rho(i)} = 1$ for $1 \leq i \leq n$ and $0$'s elsewhere. If $M$ is a $2n \times 2n$ matrix, let $M^{\NE}$, $M^{\SE}$, $M^{\SW}$, or $M^{\NW}$ denote the northeast, southeast, southwest, or northwest $n \times n$ block of $M$, respectively. If $M$ is an $n \times n$ matrix consisting of $0$'s and $1$'s with at most one non-zero entry in each row and column, the corresponding subset of $[n] \times [n]$ in Cartesian coordinates is $\mathcal{P}(M) \coloneqq \{(y, n+1-x) \mid M_{xy}=1\}$. If $\rho \in \mathfrak{S}_{2n}$, write $\mathcal{P}^{\NE}(\rho) \coloneqq \mathcal{P}(M(\rho)^{\NE})$ for the subset of $[n] \times [n]$ corresponding to the northeast block of the permutation matrix of $\rho$. Recall from \Cref{sec:background-Viennot} that $\mathcal{V}(\mathcal{P}, \nearrow)$ refers to the output of Viennot's construction with light shining $\nearrow$, and $\mathcal{S}^j(\mathcal{P}, \nearrow)$ is the result of iteratively computing the skeleton of $\mathcal{P}$, $j$ times.

\begin{theorem}\label{thm:main}
    Let $S = \stack(P, Q) \in \SYT^{\stacked}(2r \times c)$ where $P, Q \in \SYT(r \times c)$ and $n \coloneqq rc$. Let $\rho_i \coloneqq \prom_i(S) \in \mathfrak{S}_{2n}$ for $1 \leq i \leq 2r-1$. Then
    \begin{equation}\label{eq:main-1}
        \mathcal{V}(\mathcal{P}^{\NE}(\rho_r), \nearrow) = (\evacuation(P), Q).
    \end{equation}
    Furthermore, for $0 \leq k \leq r-1$,
    \begin{equation}\label{eq:main-2}
    \begin{split}
        \mathcal{P}^{\NE}(\rho_{r+k}) &= \mathcal{S}^k(\mathcal{P}^{\NE}(\rho_r), \nearrow) \\
        \mathcal{P}^{\NE}(\rho_{r-k}) &= \mathcal{S}^k(\mathcal{P}^{\NE}(\rho_r), \swarrow).
    \end{split}
    \end{equation}
    More precisely, the skeleton of the $j$-th shadow line of the skeleton $\mathcal{S}^k(\mathcal{P}^{\NE}(\rho_r), \nearrow)$ (or $\mathcal{S}^k(\mathcal{P}^{\NE}(\rho_r), \swarrow)$) is the $j$-th shadow line of the next skeleton $\mathcal{S}^{k+1}(\mathcal{P}^{\NE}(\rho_r), \nearrow)$ (or $\mathcal{S}^{k+1}(\mathcal{P}^{\NE}(\rho_r), \swarrow)$).
\end{theorem}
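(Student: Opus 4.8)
The plan is to prove that, after the coordinate change $\mathcal P(\cdot)$, the northeast $n\times n$ block of the promotion matrix $\PM(S)$ is a single diagram that records \emph{all} of Viennot's iterated skeleta of $\mathcal P^{\NE}(\rho_r)$ at once, with the numerical \emph{value} of each entry encoding its skeleton level and light direction. By \Cref{prop:prom-matrix}, the set of positions $(a,\rho_v(a))$ carrying the value $v$ in $\PM(S)$ is exactly the graph of $\rho_v$; restricting to the northeast block, \Cref{eq:main-2} becomes the assertion that inside this block the value-$(r+k)$ entries form the $k$-th $\nearrow$-skeleton and the value-$(r-k)$ entries form the $k$-th $\swarrow$-skeleton of the value-$r$ entries. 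First I would record the consequences of $S=\stack(P,Q)$: the chain of $S$ passes through $r\times c$ at step $n$, and \Cref{thm:prom-perm-symmetries}(d) gives $\rho_{r-k}=\rho_{r+k}^{-1}$ with $\rho_r$ a fixed-point-free involution, which on the matrix is the transposed-position symmetry of \Cref{rem:prom-structure}; these serve as consistency checks. I would then fix the portion of $\PEdiagram(S)$ that wraps onto the northeast block and install Cartesian coordinates compatible with $\mathcal P(M)=\{(y,n+1-x):M_{xy}=1\}$.

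The heart of the argument is a local comparison between the rules of \Cref{def:local-rules} and the skeleton operation. Viewing that portion of $\PEdiagram(S)$ as a growth diagram, I would show that the value-$r$ decorations are precisely the points of $\mathcal P^{\NE}(\rho_r)$ and that the local rule implements the skeleton one layer at a time: the passage from the value-$(r+k)$ decorations to the value-$(r+k+1)$ decorations is exactly the passage from a family of shadow lines to the corners they determine, i.e.\ to the next skeleton, and symmetrically for $\swarrow$ and the values below $r$, the choice of direction being governed by the sign of the offset $v-r$. This is where the ``vertical'' sorting rules of \Cref{def:local-rules} must be reconciled with the ``perpendicular'' growth rules of Fomin and van~Leeuwen recalled in \Cref{sec:background-Fomin}; the essential new phenomenon is that the single promotion matrix carries a graded family of decorations and so stores the whole tower of skeleta simultaneously, in both the $\nearrow$ and $\swarrow$ directions, rather than only the outermost layer.

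Granting this layer-by-layer statement, \Cref{eq:main-1} follows: the value-$r$ decorations are $\mathcal P^{\NE}(\rho_r)$, and the boundary reading of the growth diagram computes $\mathcal V(\mathcal P^{\NE}(\rho_r),\nearrow)$ by \Cref{thm:Viennot-basic}, while identifying the boundary shapes with $\evacuation(P)$ and $Q$ uses \Cref{rem:PEdiagram-pieces} together with the direction-and-evacuation dictionary of \Cref{thm:Viennot-symmetries}. Both lines of \Cref{eq:main-2} and the ``more precisely'' refinement then follow by induction on $k$, with the base case $k=0$ definitional and the inductive engine being the layer-by-layer statement above. To pin down the \emph{ordering} of the shadow lines I would use the monotonicity of \Cref{rem:prom-structure}: reading each row and column cyclically from the diagonal, the values increase, so within the northeast block every value-$(r+k+1)$ entry is forced to lie northeast of and nested within the value-$(r+k)$ entries. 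Hence the value-$(r+k+1)$ entries lying between the $j$-th and $(j{+}1)$-th value-$(r+k)$ shadow lines are exactly the skeleton of that $j$-th shadow line, which at once yields $\mathcal P^{\NE}(\rho_{r+k+1})=\mathcal S^{k+1}(\mathcal P^{\NE}(\rho_r),\nearrow)$ and identifies the skeleton of the $j$-th shadow line of $\mathcal S^{k}$ with the $j$-th shadow line of $\mathcal S^{k+1}$, in the stated order; the $\swarrow$ line is handled identically using the values below $r$. The range $0\le k\le r-1$ accounts for all $2r-1$ values in the northeast block by \Cref{rem:prom-structure}, so the induction terminates.

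The main obstacle is the local comparison of the second paragraph: showing that one step of the local rule both advances the decoration value by one and performs exactly one skeleton operation, with the sign of $v-r$ selecting the light direction. Everything else is structural bookkeeping from \Cref{rem:prom-structure} and \Cref{thm:prom-perm-symmetries} or an appeal to the established equivalences in \Cref{thm:Viennot-basic} and \Cref{thm:Viennot-symmetries}. The delicacy is that the two local-rule formalisms run in perpendicular directions and use different numbers of decorations, so one must verify not merely that the value-$r$ entries reproduce Viennot's outermost shadow lines, but that the full graded decoration data of a single promotion matrix is natively the graded skeleton data of Viennot's iterated construction.
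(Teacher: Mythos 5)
There is a genuine gap, and you have named it yourself: the entire proof hinges on the ``layer-by-layer'' claim that the local rules advance the decoration value by one while performing exactly one skeleton operation, and nothing in the proposal supplies a mechanism for proving it. Worse, as framed the inductive engine does not exist: a local rule square (\Cref{def:local-rules}) carries at most one decoration, and the value-$(r+k+1)$ decorations are \emph{not} obtained from the value-$(r+k)$ decorations by any further application of local rules --- all decorations of every value appear simultaneously in a single filling of $\PEdiagram(S)$. The relation between consecutive values is therefore a global geometric fact about where the decorations sit in the northeast block, not a local one, while the skeleton operation is likewise global (minimal elements of the doubly-shadowed region). Your fallback, the monotonicity of \Cref{rem:prom-structure}, is too weak to close this: it constrains the cyclic order of values along each row and column but does not locate the double-shadow corners, so the assertion that the value-$(r+k+1)$ entries lying ``between'' consecutive value-$(r+k)$ shadow lines are exactly the skeleton of the $j$-th line is simply a restatement of the theorem, not a derivation. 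Similarly, for \eqref{eq:main-1} you appeal to \Cref{thm:Viennot-basic} and boundary reading, but identifying the boundary lattice words with the rows of $Q$ and $\evacuation(P)$ requires knowing where each iterated shadow line exits the grid, which again presupposes the unproven description.

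What the paper does to close exactly this gap is to first obtain a closed-form description of the relevant block of the growth diagram: writing $\lambda(x,y)$ for the partitions in the $(n+1)\times(n+1)$ northeast block, it proves $\lambda(x,y)=\lambda(x,0)\vsum\lambda(0,y)$ (\Cref{lem:proof-vsum}), using that $S=\stack(P,Q)$ forces the two boundary chains to be $Q$ and $\evacuation(P)$ and verifying the candidate filling square-by-square against \Cref{rem:sorting-cases}. This yields the explicit formula $\PM(S)^{\NE}(x_{sj},y_{tj})=s+t-1$ (\Cref{lem:proof-M}), where $x_{sj}$ and $y_{tj}$ are the entries of $Q$ and $\evacuation(P)$ in row $s$ (resp.\ $t$) and column $j$. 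Once the point sets $S_k$ are known in these coordinates, the shadow lines, skeleta, the same-column pairing $j$ behind the ``more precisely'' refinement, and the ray exit positions giving \eqref{eq:main-1} all follow from elementary $\nearrow$/$\searrow$ comparisons driven only by the row- and column-increasingness of the two tableaux (\Cref{lem:proof-shadows}). To repair your outline you would need to prove some analogue of \Cref{lem:proof-vsum}; with it in hand, the delicate local-rules-versus-Viennot comparison you flag as the main obstacle becomes unnecessary, because the skeleton structure can be read off the explicit coordinates directly.
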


We prove \Cref{thm:main} in \Cref{sec:main-proof}. Before that, we discuss the structure of $\PM(S)$ in general, give an example, and deduce the consequences of \Cref{thm:main} from \Cref{sec:intro}.

\begin{remark}\label{rem:main-structure}
    \Cref{thm:main} completely determines $\PM(S)^{\NE}$ by \Cref{rem:prom-structure}. By \Cref{thm:prom-perm-symmetries}(d) this completely determines $\PM(S)^{\SW}$. Indeed, we see the upper triangles of $\PM(S)^{\NW}$ and $\PM(S)^{\SE}$ have only entries $< r$ and their lower triangles have only entries $> r$. It follows from \Cref{sec:background-local-rules} that $\PM^{\NW}(S)$ depends only on $P$, that $\PM^{\SE}(S)$ depends only on $Q$, and that these portions of $\PM(S)$ may be computed directly from the decorated evacuation diagrams of $P$ and $Q$, respectively.
\end{remark}

\begin{example}\label{ex:main}
    We revisit $P, Q$ from \Cref{ex:prom-RS} and \Cref{fig:stacking-example}. Let $S = \stack(P, Q)$. One may compute
\[
\PM\left(\,\vcenter{\hbox{\ytableaushort{1356,279{11},48{10}{12},{13}{14}{15}{19},{16}{17}{18}{22},{20}{21}{23}{24}}}}\,\right) = 
\setlength{\arraycolsep}{2.5pt} 
\renewcommand{\arraystretch}{0.6} 
\scalebox{0.8}{$\left(
\begin{array}{cccccccccccc|cccccccccccc}
\cdot & 1 & \cdot & 2 & \cdot & \cdot & \cdot & \cdot & \cdot & \cdot & \cdot & \cdot & \cdot & \cdot & \cdot & \cdot & \cdot & \cdot & 3 & \cdot & \cdot & 4 & \cdot & 5 \\
5 & \cdot & \cdot & \cdot & \cdot & \cdot & \cdot & \cdot & \cdot & \cdot & 1 & \cdot & \cdot & \cdot & \cdot & \cdot & \cdot & \cdot & 2 & \cdot & \cdot & 3 & \cdot & 4 \\
\cdot & \cdot & \cdot & 1 & \cdot & \cdot & \cdot & \cdot & \cdot & \cdot & \cdot & 2 & \cdot & \cdot & 3 & \cdot & \cdot & 4 & \cdot & \cdot & \cdot & \cdot & 5 & \cdot \\
4 & \cdot & 5 & \cdot & \cdot & \cdot & \cdot & \cdot & \cdot & \cdot & \cdot & \cdot & \cdot & \cdot & \cdot & \cdot & \cdot & \cdot & 1 & \cdot & \cdot & 2 & \cdot & 3 \\
\cdot & \cdot & \cdot & \cdot & \cdot & \cdot & \cdot & \cdot & 1 & 2 & \cdot & \cdot & \cdot & 3 & \cdot & \cdot & 4 & \cdot & \cdot & \cdot & 5 & \cdot & \cdot & \cdot \\
\cdot & \cdot & \cdot & \cdot & \cdot & \cdot & 1 & 2 & \cdot & \cdot & \cdot & \cdot & 3 & \cdot & \cdot & 4 & \cdot & \cdot & \cdot & 5 & \cdot & \cdot & \cdot & \cdot \\
\cdot & \cdot & \cdot & \cdot & \cdot & 5 & \cdot & 1 & \cdot & \cdot & \cdot & \cdot & \cdot & \cdot & 2 & \cdot & \cdot & 3 & \cdot & \cdot & \cdot & \cdot & 4 & \cdot \\
\cdot & \cdot & \cdot & \cdot & \cdot & 4 & 5 & \cdot & \cdot & \cdot & \cdot & \cdot & \cdot & \cdot & 1 & \cdot & \cdot & 2 & \cdot & \cdot & \cdot & \cdot & 3 & \cdot \\
\cdot & \cdot & \cdot & \cdot & 5 & \cdot & \cdot & \cdot & \cdot & 1 & \cdot & \cdot & \cdot & 2 & \cdot & \cdot & 3 & \cdot & \cdot & \cdot & 4 & \cdot & \cdot & \cdot \\
\cdot & \cdot & \cdot & \cdot & 4 & \cdot & \cdot & \cdot & 5 & \cdot & \cdot & \cdot & \cdot & 1 & \cdot & \cdot & 2 & \cdot & \cdot & \cdot & 3 & \cdot & \cdot & \cdot \\
\cdot & 5 & \cdot & \cdot & \cdot & \cdot & \cdot & \cdot & \cdot & \cdot & \cdot & 1 & 2 & \cdot & \cdot & 3 & \cdot & \cdot & \cdot & 4 & \cdot & \cdot & \cdot & \cdot \\
\cdot & \cdot & 4 & \cdot & \cdot & \cdot & \cdot & \cdot & \cdot & \cdot & 5 & \cdot & 1 & \cdot & \cdot & 2 & \cdot & \cdot & \cdot & 3 & \cdot & \cdot & \cdot & \cdot \\
\hline
\cdot & \cdot & \cdot & \cdot & \cdot & 3 & \cdot & \cdot & \cdot & \cdot & 4 & 5 & \cdot & \cdot & \cdot & \cdot & \cdot & 1 & \cdot & \cdot & \cdot & \cdot & \cdot & 2 \\
\cdot & \cdot & \cdot & \cdot & 3 & \cdot & \cdot & \cdot & 4 & 5 & \cdot & \cdot & \cdot & \cdot & \cdot & \cdot & 1 & \cdot & \cdot & \cdot & 2 & \cdot & \cdot & \cdot \\
\cdot & \cdot & 3 & \cdot & \cdot & \cdot & 4 & 5 & \cdot & \cdot & \cdot & \cdot & \cdot & \cdot & \cdot & 1 & \cdot & \cdot & \cdot & 2 & \cdot & \cdot & \cdot & \cdot \\
\cdot & \cdot & \cdot & \cdot & \cdot & 2 & \cdot & \cdot & \cdot & \cdot & 3 & 4 & \cdot & \cdot & 5 & \cdot & \cdot & \cdot & \cdot & \cdot & \cdot & \cdot & \cdot & 1 \\
\cdot & \cdot & \cdot & \cdot & 2 & \cdot & \cdot & \cdot & 3 & 4 & \cdot & \cdot & \cdot & 5 & \cdot & \cdot & \cdot & \cdot & \cdot & \cdot & \cdot & 1 & \cdot & \cdot \\
\cdot & \cdot & 2 & \cdot & \cdot & \cdot & 3 & 4 & \cdot & \cdot & \cdot & \cdot & 5 & \cdot & \cdot & \cdot & \cdot & \cdot & \cdot & \cdot & 1 & \cdot & \cdot & \cdot \\
3 & 4 & \cdot & 5 & \cdot & \cdot & \cdot & \cdot & \cdot & \cdot & \cdot & \cdot & \cdot & \cdot & \cdot & \cdot & \cdot & \cdot & \cdot & 1 & \cdot & \cdot & 2 & \cdot \\
\cdot & \cdot & \cdot & \cdot & \cdot & 1 & \cdot & \cdot & \cdot & \cdot & 2 & 3 & \cdot & \cdot & 4 & \cdot & \cdot & \cdot & 5 & \cdot & \cdot & \cdot & \cdot & \cdot \\
\cdot & \cdot & \cdot & \cdot & 1 & \cdot & \cdot & \cdot & 2 & 3 & \cdot & \cdot & \cdot & 4 & \cdot & \cdot & \cdot & 5 & \cdot & \cdot & \cdot & \cdot & \cdot & \cdot \\
2 & 3 & \cdot & 4 & \cdot & \cdot & \cdot & \cdot & \cdot & \cdot & \cdot & \cdot & \cdot & \cdot & \cdot & \cdot & 5 & \cdot & \cdot & \cdot & \cdot & \cdot & 1 & \cdot \\
\cdot & \cdot & 1 & \cdot & \cdot & \cdot & 2 & 3 & \cdot & \cdot & \cdot & \cdot & \cdot & \cdot & \cdot & \cdot & \cdot & \cdot & 4 & \cdot & \cdot & 5 & \cdot & \cdot \\
1 & 2 & \cdot & 3 & \cdot & \cdot & \cdot & \cdot & \cdot & \cdot & \cdot & \cdot & 4 & \cdot & \cdot & 5 & \cdot & \cdot & \cdot & \cdot & \cdot & \cdot & \cdot & \cdot \\
\end{array}
\right)$}
\]
where we have used $\cdot$ in place of $0$ for visual clarity. Focus on the northeast block, $\PM(S)^{\NE}$. From the locations of the $3$'s in this block, we have
  \[ \prom_3^{\NE}(S) = [7, 10, 3, 12, 2, 1, 6, 11, 5, 9, 4, 8], \]
as in \Cref{ex:prom-RS}. Reading off the Cartesian coordinates of the $3$'s in this block gives
\begin{align*}
  \mathcal{P}^{\NE}(\prom_3(S))
    = \{&(1,7),(4,2),(8,1), \\
      &(2,8),(5,4),(9,3), \\
      &(3,10),(6,6),(11,5), \\
      &(7,12),(10,11),(12,9)\}.
\end{align*}
Here we have listed the four shadow lines in groups of three. Computing shadow lines for all higher skeleta and overlaying them on $\PM(S)^{\NE}$ results in the following:

\begin{center}
\begin{tikzpicture}[scale=0.45]
\draw[customred,very thick] (1,13)--(1,7)--(4,7)--(4,2)--(8,2)--(8,1)--(13,1) node[font=\scriptsize,above] at (1,13) {1} node[font=\scriptsize,right] {1};
\draw[customred,very thick] (2,13)--(2,8)--(5,8)--(5,4)--(9,4)--(9,3)--(13,3) node[font=\scriptsize,above] at (2,13) {1} node[font=\scriptsize,right] {1};
\draw[customred,very thick] (3,13)--(3,10)--(6,10)--(6,6)--(11,6)--(11,5)--(13,5) node[font=\scriptsize,above] at (3,13) {1} node[font=\scriptsize,right] {1};
\draw[customred,very thick] (7,13)--(7,12)--(10,12)--(10,11)--(12,11)--(12,9)--(13,9) node[font=\scriptsize,above] at (7,13) {1} node[font=\scriptsize,right] {1};

\draw[orange,very thick] (4,13)--(4,7)--(8,7)--(8,2)--(13,2) node[font=\scriptsize,above] at (4,13) {2} node[font=\scriptsize,right] {2};
\draw[orange,very thick] (5,13)--(5,8)--(9,8)--(9,4)--(13,4) node[font=\scriptsize,above] at (5,13) {2} node[font=\scriptsize,right] {2};
\draw[orange,very thick] (6,13)--(6,10)--(11,10)--(11,6)--(13,6) node[font=\scriptsize,above] at (6,13) {2} node[font=\scriptsize,right] {2};
\draw[orange,very thick] (10,13)--(10,12)--(12,12)--(12,11)--(13,11) node[font=\scriptsize,above] at (10,13) {2} node[font=\scriptsize,right] {2};

\draw[green!70!black,very thick] (8,13)--(8,7)--(13,7) node[font=\scriptsize,above] at (8,13) {3} node[font=\scriptsize,right] {3};
\draw[green!70!black,very thick] (9,13)--(9,8)--(13,8) node[font=\scriptsize,above] at (9,13) {3} node[font=\scriptsize,right] {3};
\draw[green!70!black,very thick] (11,13)--(11,10)--(13,10) node[font=\scriptsize,above] at (11,13) {3} node[font=\scriptsize,right] {3};
\draw[green!70!black,very thick] (12,13)--(12,12)--(13,12) node[font=\scriptsize,above] at (12,13) {3} node[font=\scriptsize,right] {3};

\foreach \coord in {(7,9), (3,5), (2,3), (1,1)} {
    \node[fill=white] at \coord {$1$};
}
\foreach \coord in {(7,11),(10,9), (3,6),(6,5), (2,4),(5,3), (1,2),(4,1)} {
    \node[fill=white] at \coord {$2$};
}
\foreach \coord in {(1,7),(4,2),(8,1), (2,8),(5,4),(9,3), (3,10),(6,6),(11,5), (7,12),(10,11),(12,9)} {
    \node[customred,draw,circle,fill=white,inner sep=1pt] at \coord {\textbf{3}};
}
\foreach \coord in {(4,7),(8,2), (5,8),(9,4), (6,10),(11,6), (10,12),(12,11)} {
    \node[orange,draw,circle,fill=white,inner sep=1pt] at \coord {$4$};
}
\foreach \coord in {(8,7), (9,8), (11,10), (12,12)} {
    \node[green!70!black,draw,circle,fill=white,inner sep=1pt] at \coord {$5$};
}

\draw[->, line width=1.5pt] (-0.4,-0.4) -- (0.4,0.4);

\node[anchor=west] at (15,6.5) {
$\evacuation(P) = 
\vcenter{\hbox{
\begin{ytableau}
1 & 3 & 5 & 9 \\
2 & 4 & 6 & 11 \\
7 & 8 & 10 & 12
\end{ytableau}
}}$
};

\node[anchor=south] at (7,15) {
$Q = 
\vcenter{\hbox{
\begin{ytableau}
1 & 2 & 3 & 7 \\
4 & 5 & 6 & 10 \\
8 & 9 & 11 & 12
\end{ytableau}
}}$
};
\end{tikzpicture}
\end{center}
We see directly that the successive skeleta of $\mathcal{P}^{\NE}(\prom_3(S))$ are the entries of the promotion permutations $\prom_4(S), \prom_5(S)$ in the northeast block. The other claims in \Cref{thm:main} and \Cref{rem:main-structure} may also be seen directly from this data. For instance, one may imagine casting light from the $3$'s in the $\swarrow$ direction, in which case the skeleton is the location of the $2$'s.
\end{example}

\begin{proof}[Proof of \Cref{thm:prom-RS}]
    Let $\mathcal{P} \coloneqq \mathcal{P}^{\NE}(\rho_r)$. By \eqref{eq:main-1} and \Cref{thm:Viennot-symmetries},
      \[ \mathcal{V}(\mathcal{P}, \nearrow) = (\evacuation(P), Q)\qquad\Rightarrow\qquad\mathcal{V}(\mathcal{P}, \searrow) = (Q^\top, P^\top). \]
    It remains to relate $\mathcal{V}(\mathcal{P}, \searrow)$ and the RS correspondence. Again by \eqref{eq:main-1}, $\mathcal{P}^{\NE}(\rho_r) \in \mathfrak{S}_n$. Hence $\mathcal{P}^{\NE}(\rho_r) = \mathcal{P}(M(\rho_r^{\NE}))$. One may be tempted to directly apply \Cref{thm:Viennot-basic} to incorrectly conclude that $\mathcal{V}(\mathcal{P}, \nearrow) = \RS(\rho_r^{\NE})$, but the convention for translating $\rho_r^{\NE}$ to a subset via $\mathcal{P}^{\NE}(\rho_r)$ differs from that in \Cref{thm:Viennot-basic}. Indeed, comparing the two conventions, we find we must first rotate $\mathcal{P}$ counterclockwise $90^\circ$ to get the subset $\mathcal{P}'$ associated to $\rho_r^{\NE}$ in \Cref{thm:Viennot-basic}. Thus $\mathcal{V}(\mathcal{P}, \searrow) = \mathcal{V}(\mathcal{P}', \nearrow) = \RS(\rho_r^{\NE})$, completing the proof.
\end{proof}

\begin{proof}[Proof of \Cref{cor:prom-RS}]
    Injectivity follows directly from \Cref{thm:prom-RS} since $\RS$ is a bijection. That $\prom_r(S) \in \mathfrak{S}_{2n}$ is a fixed-point free involution is clear from \Cref{sec:background-prom-perms}. The $2$-cycles are as indicated by \Cref{thm:prom-RS} and \Cref{thm:prom-perm-symmetries}(d).

    As for the crossing and nesting numbers, for a perfect matching $M$ on $[2n]$, consider the upper triangular matrix $U$ that has a $1$ in row $i$ and column $j$ when $\{i, j\}$ is a block in $M$ with $i < j$, and $0$'s everywhere else. It is easy to see that the crossing number of $M$ is the length of the longest southeast chain of $1$'s of $U$ that is contained in a rectangle that fully fits into the strictly upper triangular part of $U$. Similarly, the nesting number corresponds to the longest southwest chain of $1$'s that is contained in such a rectangle.

    For a fixed-point free involution $\rho \in \mathfrak{S}_{2n}$ where each $2$-cycle $(a\ b)$ of $\rho$ is of the form $a \in \{1, \ldots, n\}$ and $b \in \{n+1, \ldots, 2n\}$, all $1$'s in the upper triangle of $M(\rho)$ are in $M(\rho)^{\NE}$, which is itself the permutation matrix of $\rho^{\NE}$. From the previous paragraph, the crossing and nesting numbers of $\rho$ are the lengths of the longest southeast and southwest chains of $1$'s, respectively, in $\rho^{\NE}$. By our conventions, a southwest chain of $1$'s corresponds to a decreasing subsequence and a southeast chain of $1$'s corresponds to an increasing subsequence. By Greene's theorem (see, e.g., \cite[Thm.~3.5.3]{Sagan-symmetric-group}), the longest increasing subsequence of $\rho^{\NE}$ is the number of columns in the $\RS$ shape, and the longest decreasing subsequence of $\rho^{\NE}$ is the number of rows in the $\RS$ shape. For a particular $rc=n$, these numbers are $r$ and $c$, respectively, if and only if the $\RS$ shape is $c \times r$. Hence the result follows by \Cref{thm:prom-RS} and the invertibility of the RS correspondence.
\end{proof}

\subsection{Main theorem proof}\label{sec:main-proof}

We now turn to the proof of \Cref{thm:main}. Suppose $S = \stack(P, Q) \in \SYT(2r \times c)$ throughout and let $n \coloneqq rc$.

The northeast $n \times n$ block $\PM(S)^{\NE}$ by definition corresponds to decorations in an $(n+1) \times (n+1)$ block of partitions in $\PEdiagram(S)$. Call these partitions $\lambda(x, y)$ for $0 \leq x, y \leq n$, where we use Cartesian coordinates. As  in \Cref{rem:PEdiagram-pieces}, the $n$-th column of $\PEdiagram(S)$ encodes $\evacuation(P)$, so in our indexing scheme,
  \[ \evacuation(P) = \lambda(0, 0) \to \lambda(0, 1) \to \cdots \to \lambda(0, n). \]

We next give a complete description of $\lambda(x, y)$ in terms of $\lambda(x, 0)$ and $\lambda(0, y)$. First, we introduce the following notion.

\begin{definition}
    Suppose $\lambda, \mu$ are two top-justified collections of boxes (such as integer partitions). The \emph{vertical sum} $\lambda \vsum \mu$ is the collection of top-justified boxes obtained by vertically stacking the columns of $\mu$ onto the columns of $\lambda$.
\end{definition}

For example, if $\lambda$ and $\mu$ are partitions, then $\lambda \vsum \mu = (\lambda' + \mu')'$, where $'$ denotes the conjugate partition and addition is done pointwise. Our arguments will use the additional generality afforded by not necessarily requiring the collections of boxes to be left-justified.

\begin{lemma}\label{lem:proof-vsum}
    We have
    \begin{enumerate}
        \item $\lambda(0,0) \to \lambda(0,1) \to \cdots \to \lambda(0,n) = \evacuation(P)$,
        \item $\lambda(0,0) \to \lambda(1,0) \to \cdots \to \lambda(n,0) = Q$, and
        \item $\lambda(x,y) = \lambda(x,0) \vsum \lambda(0,y)$.
    \end{enumerate}
\end{lemma}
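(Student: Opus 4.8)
The plan is to choose coordinates on the relevant block of $\PEdiagram(S)$, identify its two bounding edges, and then fill in the interior by a single induction driven by the local rules. First I would pin down the geometry: the $(n+1)\times(n+1)$ array $\lambda(x,y)$ is the block of $\PEdiagram(S)$ occupying $\PEdiagram$-rows $0 \le j \le n$ and positions $n \le m \le 2n$, with $\lambda(x,y)$ sitting at row $n-y$ and position $n+x$. Under this identification the top edge $y=n$ is just the first row of $\PEdiagram(S)$, namely the growth of $S$ itself, so that $\lambda(x,n)=\lambda^{n+x}(S)=(r\times c)\vsum\lambda^x(Q)$ can be read off directly from $S=\stack(P,Q)$. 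This top edge, together with part (1), will be the two base boundaries of the induction.

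For part (1) I would invoke the locality of the local rules: the corner of a local-rule square at position $m+1$ is computed from partitions at positions $\le m+1$ only, so the restriction of $\PEdiagram(S)$ to positions $\le n$ is self-contained, and because its top row is exactly $P$ (the entries $\le n$ of $S$ fill the top rectangle), this restriction coincides with $\PEdiagram(P)$. Hence its $n$-th (last) column is $\evacuation(P)$ by \Cref{rem:PEdiagram-pieces} applied to $P$, which is precisely the left edge $\lambda(0,y)$.

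The main step is to prove the identity $\lambda(x,y)=\lambda(0,y)\vsum\sigma(x)$, where $\sigma(x)\coloneqq\lambda^x(Q)$ is recovered from the top edge by cancelling $r\times c$ from $\lambda(x,n)=(r\times c)\vsum\sigma(x)$. I would argue by downward induction (over $y=n-1,\dots,0$, and within each row over $x=1,\dots,n$) using the local rule $\lambda(x,y)=\sort(\lambda(x,y+1)+\lambda(x-1,y)-\lambda(x-1,y+1))$, with left edge and top edge as base cases. Passing to column-height vectors, the inductive hypothesis makes the three known corners equal to $K+e_p$, $K-e_q$, and $K$, where $p=p(x)$ is the column in which $Q$ grows at step $x$ and $q=q(y)$ is the column in which $\evacuation(P)$ grows at step $y$; the target value is $K+e_p-e_q$. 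Setting $y=0$ then gives part (2), since $\lambda(x,0)=\sigma(x)=\lambda^x(Q)$, and the identity itself is part (3).

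The hard part will be the reconciliation inside the inductive step, since the local rule adds partition \emph{rows} and re-sorts while the vertical sum adds \emph{columns}, and these operations do not commute in general. The key to resolving this is that consecutive shapes along each edge differ by a single cell, so the two off-diagonal corners differ from the SW corner by exactly one cell each; this forces a clean case of \Cref{rem:sorting-cases}. When $p\neq q$ the two added cells are disconnected (case (i)) and both are adjoined, giving column heights $K+e_p-e_q$; when $p=q$ the sorting case (iii) applies and the computed corner equals the given corner $K$, which is again $K+e_p-e_q$. The delicate bookkeeping is to rule out the degenerate same-row case (ii): it would force the two off-diagonal corners to coincide, contradicting the distinctness implied by $p\neq q$, so it never arises. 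Once this trichotomy is verified, the induction closes and all three parts follow.
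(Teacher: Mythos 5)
Your overall route is the same as the paper's: identify the left edge of the block with $\evacuation(P)$ and the top edge with $(r\times c)\vsum\lambda^x(Q)$, posit the vertical-sum formula $\lambda(x,y)=\lambda(0,y)\vsum\sigma(x)$, and verify it square-by-square against the local rules via the trichotomy of \Cref{rem:sorting-cases}. The paper packages your induction as ``define $\tilde\lambda(x,y)\coloneqq\mu_x\vsum\lambda(0,y)$, check every square is a local rule diagram, and invoke uniqueness of local-rule filling,'' which is the same argument in different clothing; your locality argument for part (1) is a correct expansion of what the paper's appeal to \Cref{rem:PEdiagram-pieces} tacitly uses.

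However, your exclusion of case (ii) --- the step on which the whole verification hinges --- fails as stated. You argue that case (ii) ``would force the two off-diagonal corners to coincide, contradicting the distinctness implied by $p\neq q$.'' First, coincidence of the NW and SE corners is not by itself contradictory: it is exactly what happens in the legitimate case (iii). Second, the ``distinctness implied by $p\neq q$'' presupposes that the SE corner equals the target $K+e_p-e_q$, which is precisely the inductive conclusion you are trying to establish, so the argument is circular. Note also that partition-ness of the three \emph{known} corners does not suffice: writing $c_p$ for a single box in column $p$, if $p=q+1$ and columns $q,q+1$ of the SW corner have equal heights, then SW, NW $=\mathrm{SW}\vsum c_q$, and NE are all perfectly valid partitions while the two added cells sit in the same row, so nothing in the known data excludes case (ii). What does exclude it is the \emph{target}: since $\vsum$ is associative and commutative and vertical sums of partitions are partitions, $\lambda(0,y)\vsum\sigma(x)=\mathrm{SW}\vsum c_p$ is a partition, yet in this same-row configuration adding a box atop column $q+1$ when columns $q,q+1$ have equal heights is not a partition --- contradiction. (In the mirror sub-case $q=p+1$, the same partition-ness argument applied to the known corner NW $=\mathrm{SW}\vsum c_q$ already gives the contradiction.) This is exactly the paper's observation that ``$\tilde\lambda(x,y)\vsum c_{i+1}$ is not a partition'': by defining all four corners as vertical sums \emph{before} checking the squares, the paper has partition-ness of the SE corner available for free, whereas your compute-then-compare induction must invoke it explicitly. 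With that one-line repair your proof closes and coincides with the paper's.
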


\begin{proof}
    We have already observed (1). For (3), first suppose
      \[ Q = \mu_0 \to \mu_1 \to \cdots \to \mu_n \]
    and let $\tilde{\lambda}(x, y) \coloneqq \mu_x \vsum \lambda(0,y)$. We claim
      \[ \tilde{\lambda}(x,y) = \lambda(x,y). \]
    Along the left edge, we have $\tilde{\lambda}(0, y) = \mu_0 \vsum \lambda(0, y) = \lambda(0, y)$. Along the top edge, we have $\tilde{\lambda}(x, n) = \mu_x \vsum \lambda(0, n) = (r \times c) \vsum \mu_x$, which indeed agrees with $\lambda(x, n)$ since this corresponds to the portion of $\stack(P, Q)$ coming from $Q$. Recall that the $(n+1) \times (n+1)$ block of partitions $\lambda(x, y)$ is uniquely determined by filling in the block with local rules starting from the upper left. Hence to prove the claim it suffices to show that each square $\tilde{\lambda}(x,y), \tilde{\lambda}(x,y+1), \tilde{\lambda}(x+1,y), \tilde{\lambda}(x+1,y+1)$ is a local rule diagram.
    
    For that, first note that each $\tilde{\lambda}(x,y)$ is a partition since the vertical sum of partitions is a partition. Since $\mu_x$ and $\mu_{x+1}$ differ by a single cell, there exists some unique $i$ such that $\mu_{x+1} = \mu_x \vsum c_i$, where we write $c_i$ to denote a single box in the $i$-th column. Likewise there is a unique $j$ such that $\lambda(0,y+1) = \lambda(0,y) \vsum c_j$. Since $\vsum$ is associative and commutative, we then have
    \begin{align*}
        \tilde{\lambda}(x+1,y) &= \tilde{\lambda}(x,y) \vsum c_i \\
        \tilde{\lambda}(x,y+1) &= \tilde{\lambda}(x,y) \vsum c_j \\
        \tilde{\lambda}(x+1,y+1) &= \tilde{\lambda}(x,y) \vsum c_i \vsum c_j.
    \end{align*}
    It only remains to check the sorting condition \eqref{eq:local_rule.sort} for local rule diagrams. Alternatively, we may check the description in \Cref{rem:sorting-cases}. If the two added boxes in $\tilde{\lambda}(x+1,y+1)$ occur in different rows and columns of $\tilde{\lambda}(x, y)$, then clearly $i \neq j$ and we are in case (i) of \Cref{rem:sorting-cases}. If the two added boxes occur in the same row, then $j=i \pm 1$, say $j=i+1$, but then $\tilde{\lambda}(x,y) \vsum c_{i+1}$ is not a partition, so case (ii) of \Cref{rem:sorting-cases} does not occur here. Finally, if the two added boxes occur in the same column, then $j=i$ and we are in case (iii) of \Cref{rem:sorting-cases}, completing the claim.
    
    From the claim, $\lambda(x,0) = \tilde{\lambda}(x,0) = \mu_x \vsum \lambda(0,0) = \mu_x$. Hence we indeed have (2). Now $\tilde{\lambda}(x,y) = \lambda(x,0) \vsum \lambda(0,y)$, so (3) follows from the claim as well.
\end{proof}

It will be convenient to allow Cartesian coordinates for entries of an $n \times n$ matrix $M$, so we write $M(x,y) \coloneqq M_{n+1-y,x}$. For instance, $M(1,1)$ is the lower-left entry of $M$ and $M(1,n)$ is the upper-left entry.

Let
\begin{align*}
    x_{ij} &\coloneqq \text{entry in the $i$-th row, $j$-th column of $Q$} \\
    y_{ij} &\coloneqq \text{entry in the $i$-th row, $j$-th column of $\evacuation(P)$}.
\end{align*}
These entries directly encode $\PM(S)^{\NE}$ as follows.

\begin{lemma}\label{lem:proof-M}
    For all $1 \leq s, t \leq r$ and $1 \leq j \leq c$, we have
    \begin{equation}
        \PM(S)^{\NE}(x_{sj}, y_{tj}) = s+t-1.
    \end{equation}
    Furthermore, this determines all non-zero entries of $\PM(S)^{\NE}$.
\end{lemma}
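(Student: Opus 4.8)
The plan is to read off each nonzero entry of $\PM(S)^{\NE}$ directly from the partition grid $\lambda(x,y)$ supplied by \Cref{lem:proof-vsum}. Fix the square with corners $\lambda(x,y)$ (southwest), $\lambda(x+1,y)$, $\lambda(x,y+1)$, and $\lambda(x+1,y+1)$ (northeast). By \Cref{rem:sorting-cases}, a nonzero decoration $i$ is recorded at this square precisely in case (iii): when the two cells of $\lambda(x+1,y+1)\setminus\lambda(x,y)$ are adjacent and lie in a single column, occupying rows $i,i+1$. So the first step is to identify the two cells added in the $x$- and $y$-directions and to determine when they share a column.

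For that I would use \Cref{lem:proof-vsum} twice. By part~(2), stepping $x\to x+1$ adds the cell of $Q$ containing $x+1$; if $x+1 = x_{sj}$ this cell sits in column $j$, and since entries increase down columns of $Q$, column $j$ of $\lambda(x,0)=\mu_x$ contains exactly $s-1$ cells. Symmetrically, by part~(1), stepping $y\to y+1$ adds the cell of $\evacuation(P)$ containing $y+1$; if $y+1 = y_{tj'}$ it lies in column $j'$ with $t-1$ cells beneath nothing in column $j'$ of $\lambda(0,y)$. The two added cells share a column exactly when $j=j'$, and since every $\lambda(x',y')$ is top-justified, two cells appended to one column automatically fill the bottom two rows of that column and are therefore adjacent — so case (iii) holds. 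Now part~(3) together with additivity of column heights under $\vsum$ gives that column $j$ of $\lambda(x,y)$ has $(s-1)+(t-1)=s+t-2$ cells, so the two new cells occupy rows $s+t-1$ and $s+t$ and the decoration is $i=s+t-1$. Crucially this value depends only on the shapes $\lambda(x,y)$ and $\lambda(x+1,y+1)$, so the stacking order in the vertical sum is immaterial. Matching this square to its entry in $\PM(S)^{\NE}$ via the construction of the promotion matrix and the Cartesian convention $M(x,y)=M_{n+1-y,x}$ should place $s+t-1$ at $(x+1,y+1)=(x_{sj},y_{tj})$, which is the asserted formula.

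For the final clause, I would argue exhaustiveness by a counting bijection. Every decorated square in this block is of case (iii) by the analysis above, hence records a column $j$ and the two rows $s,t$ in which the creating cells appeared in $Q$ and $\evacuation(P)$; since both tableaux lie in $\SYT(r\times c)$, automatically $1\le s,t\le r$ and $1\le j\le c$. Conversely each triple $(s,t,j)$ yields the unique square $x=x_{sj}-1,\ y=y_{tj}-1$, both in $\{0,\dots,n-1\}$, so the correspondence $(s,t,j)\mapsto$ decorated square is a bijection and the formula indeed accounts for all nonzero entries. The conceptual content here is light — the value $s+t-1$ falls straight out of column-height additivity — so the main obstacle is purely bookkeeping: reconciling three indexings, namely the row/column position of an entry inside $Q$ or $\evacuation(P)$, the column-height data of the vertical sum, and the Cartesian coordinate at which $\PM(S)$ records a decoration (including the ``initial $0$ in each row'' and the column-wrapping from the definition of the promotion matrix). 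Pinning down that the square between $\lambda(x,y)$ and $\lambda(x+1,y+1)$ lands at matrix position $(x_{sj},y_{tj})$, rather than off by a shift, is the one place that demands genuine care; everything else is immediate from \Cref{rem:sorting-cases} and \Cref{lem:proof-vsum}.
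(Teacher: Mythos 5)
Your proposal is correct and follows essentially the same route as the paper's proof: identify nonzero decorations with case (iii) of \Cref{rem:sorting-cases}, use \Cref{lem:proof-vsum} to see that a decoration occurs exactly when the cells added in the $x$- and $y$-directions lie in a common column $j$ (i.e., $x = x_{sj}$, $y = y_{tj}$), and compute the decoration $s+t-1$ from additivity of column heights under $\vsum$. Your extra bijective bookkeeping for the final clause and your explicit adjacency-from-top-justification remark are just slightly more verbose renderings of the same ``if and only if'' the paper extracts from its proof of \Cref{lem:proof-vsum}(3).
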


\begin{proof}
    From the local rule construction in \Cref{sec:background-local-rules}, the entry $\PM(S)^{\NE}(x, y)$ for $1 \leq x, y \leq n$ is the decoration $m$ (if present) in the local rule square $\lambda(x-1, y-1), \lambda(x-1,y), \lambda(x,y-1), \lambda(x,y)$. A decoration is present if and only if sorting occurs, or equivalently if and only if case (iii) of \Cref{rem:sorting-cases} occurs, where the added cells are adjacent in some column $j$ and rows $m, m+1$.
    
    In the course of proving \Cref{lem:proof-vsum}(3), which we now appeal to, we saw that case (iii) occurs here if and only if $\lambda(x, 0) = \lambda(x-1,0) \vsum c_j$ and $\lambda(0,y) = \lambda(0,y-1) \vsum c_j$. By \Cref{lem:proof-vsum}(1)-(2), this says the box labeled $x$ in $Q$ and the box labeled $y$ in $\evacuation(P)$ are each in column $j$, say $x_{sj} = x$ and $y_{tj} = y$. The $j$-th column of $\lambda(x,y) = \lambda(x,0) + \lambda(0,y)$ hence has length $s+t$, so the first added cell is in row $m = s+t-1$. That is,
      \[ \PM(S)(x_{sj}, y_{tj})^{\NE} = \PM(S)^{\NE}(x, y) = m = s+t-1. \]
\end{proof}

From \Cref{lem:proof-M}, the entries in $\PM(S)^{\NE}$ corresponding to $\prom_k(S)$ are those at Cartesian coordinates
  \[ S_k \coloneqq \{(x_{sj},y_{tj}) \mid 1 \leq j \leq c\text{ and }1 \leq s,t \leq r\text{ and }s+t-1=k\}. \]
The case $S_r$ is instructive. Take the same column in both $Q$ and $\evacuation(P)$ and form points from the pairs of entries in complementary rows, giving $r \cdot c = n$ pairs overall.

More precisely, for a fixed $\prom_k(S)$, we may consider pairs coming from the $j$-th columns of $Q$ and $\evacuation(P)$,
  \[ S_{kj} \coloneqq \{(x_{sj},y_{tj}) \mid 1 \leq s, t \leq r\text{ and }s+t-1=k\}. \]
We next show these are in fact the appropriate shadow lines.

\begin{lemma}\label{lem:proof-shadows}
    The shadow lines of $S_k$ are
    \begin{align*}
    \begin{cases}
        \mathcal{L}_j(S_k, \nearrow) = S_{kj}
          & \text{where }1 \leq j \leq c\text{ and }r \leq k \leq 2r-1, \\
        \mathcal{L}_j(S_k, \swarrow) = S_{kj}
          & \text{where }1 \leq j \leq c\text{ and }1 \leq k \leq r.
    \end{cases}
    \end{align*}
    The shadow of $S_k$ is
    \begin{align*}
    \begin{cases}
        \mathcal{S}(S_k, \nearrow) = S_{k+1}
          & \text{where }r \leq k \leq 2r-1, \\
        \mathcal{S}(S_k, \swarrow) = S_{k-1}
          & \text{where }1 \leq k \leq r,
    \end{cases}
    \end{align*}
    where $S_0 = S_{2r} \coloneqq \varnothing$. Moreover,
    \begin{align*}
    \begin{cases}
        \mathcal{S}(S_{kj}, \nearrow) = S_{k+1,j}
          & \text{where }r \leq k \leq 2r-1, \\
        \mathcal{S}(S_{kj}, \swarrow) = S_{k-1,j}
          & \text{where }1 \leq k \leq r.
    \end{cases}
    \end{align*}
\end{lemma}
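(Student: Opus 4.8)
Looking at Lemma~\ref{lem:proof-shadows}, I need to prove three claims about the shadow lines, the skeleton, and the column-restricted skeleton of the point sets $S_k$, where each $S_{kj}$ consists of points $(x_{sj}, y_{tj})$ with $s + t - 1 = k$ drawn from the $j$-th columns of $Q$ and $\evacuation(P)$.

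My plan is to first unwind the structure of each $S_{kj}$ explicitly. Within a fixed column $j$, the entries $x_{1j} < x_{2j} < \cdots < x_{rj}$ increase down the column of $Q$, and likewise $y_{1j} < y_{2j} < \cdots < y_{rj}$ increase down the column of $\evacuation(P)$. For $r \le k \le 2r-1$, the constraint $s + t - 1 = k$ forces $s$ to range over $k - r + 1 \le s \le r$ (so that $t = k+1-s$ lies in $[1,r]$); as $s$ increases, $x_{sj}$ increases while $t$ decreases, so $y_{tj}$ decreases. Thus $S_{kj}$ is an \emph{antichain} under $\nearrow$ (an increase in the first coordinate is paired with a decrease in the second), which is exactly the defining shape of a shadow line. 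The first step is therefore to verify that $S_{kj}$ is a single shadow line and that the collection $\{S_{kj}\}_{1 \le j \le c}$ is precisely the list $\mathcal{L}_1, \mathcal{L}_2, \ldots$ produced by the shadow-line algorithm. For this I would compare corners: the staircase of $S_{kj}$ has its ``inner corners'' at points $(x_{sj}, y_{t-1,j})$ between consecutive matching points, and I must check that when minimal elements of $S_k$ are peeled off under $\nearrow$, they group exactly by column $j$. The key observation justifying the grouping is that the columns of a standard tableau interleave predictably, so that the southwest-most antichain of $S_k$ coincides with one particular column's contribution.

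The second step is the skeleton computation. By definition, $\mathcal{S}(S_{kj}, \nearrow)$ is the set of minimal elements (under $\nearrow$) of the region covered by at least two shadows of points of $S_{kj}$. For a staircase shadow line, these doubly-covered minimal points are exactly the inner corners, i.e.\ points of the form $(x_{s+1,j}, y_{t,j})$ where $(x_{sj}, y_{t+1,j})$ and $(x_{s+1,j}, y_{tj})$ are consecutive matched points with $s+t-1 = k$. I would compute that such a corner has first coordinate $x_{s+1,j}$ and second coordinate $y_{t,j}$, whose index sum is $(s+1) + t - 1 = k+1$, so it lies in $S_{k+1,j}$; conversely every point of $S_{k+1,j}$ arises this way. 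This establishes $\mathcal{S}(S_{kj}, \nearrow) = S_{k+1,j}$, and summing over $j$ (using that skeleta of distinct shadow lines are disjoint and union to the total skeleton) gives $\mathcal{S}(S_k, \nearrow) = S_{k+1}$. The $\swarrow$ statements follow by the same analysis with all inequalities reversed, or more economically by a symmetry argument noting that $\swarrow$ is $\nearrow$ after a $180^\circ$ rotation, under which the roles of increasing and decreasing indices swap and $k \mapsto 2r - k$.

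The main obstacle I anticipate is the \emph{global} shadow-line claim, namely that $\mathcal{L}_j(S_k, \nearrow) = S_{kj}$ with the columns $j$ in exactly this order, rather than merely that each $S_{kj}$ is \emph{some} shadow line. This requires showing the columns peel off in the correct sequence, which hinges on a monotonicity/interlacing property: for consecutive columns $j$ and $j+1$, the points of $S_{k,j}$ must be shadow-dominated appropriately relative to $S_{k,j+1}$ so that the minimal antichain is $S_{k1}$, then $S_{k2}$, and so on. I would extract this from the fact that in a standard Young tableau, moving from column $j$ to column $j+1$ shifts entries in a controlled way (each $x_{s,j+1} > x_{s,j}$ within a row and the column shapes nest), and I expect this to be where the tableau combinatorics genuinely enters; the skeleton identities by contrast are essentially local staircase bookkeeping. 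Once the ordering of shadow lines is pinned down, the remaining equalities in the lemma are formal consequences, and the boundary conventions $S_0 = S_{2r} = \varnothing$ handle the extreme values of $k$ automatically since no valid $(s,t)$ pair exists there.
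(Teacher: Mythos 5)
Your proposal takes essentially the same route as the paper's proof: the paper records your steps in a single diagram of relations ($\searrow$ between consecutive points within each column block $S_{kj}$ and $\nearrow$ between corresponding points of consecutive columns, both immediate from the row- and column-increasing conditions of $Q$ and $\evacuation(P)$), reads off the peeling order $\mathcal{L}_j(S_k,\nearrow) = S_{kj}$ from these relations, obtains the per-column skeleton exactly as you do via the inner-corner/interleaving computation $(x_{\ell j}, y_{rj}) \nearrow (x_{\ell+1,j}, y_{rj}) \searrow (x_{\ell+1,j}, y_{r-1,j}) \nearrow \cdots$, and dispatches the $\swarrow$ case as ``analogous,'' i.e.\ by the inequality reversal you suggest. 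One bookkeeping slip worth fixing: the consecutive matched points flanking an inner corner should be $(x_{sj}, y_{tj})$ and $(x_{s+1,j}, y_{t-1,j})$ with $s+t-1=k$ --- as written, your corner $(x_{s+1,j}, y_{tj})$ coincides with one of your two stated matched points --- though your index-sum arithmetic $(s+1)+t-1 = k+1$ is the correct computation and the conclusion $\mathcal{S}(S_{kj},\nearrow)=S_{k+1,j}$ stands.
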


\begin{proof}
    We take $r \leq k \leq 2r-1$ throughout, the other case being analogous. Set $\ell \coloneq k-r+1 \leq r$. We have the following diagram of relations between the $(r-\ell+1) \cdot c$ entries of $S_k = S_{k1} \sqcup \cdots \sqcup S_{kc}$:
    \[
    \begin{array}{ccccccc}
        S_{k1}
          &
          &
          S_{k2}
          &
          &
          &
          & S_{kc} \\
        \hline
        (x_{\ell1},y_{r1})
          & \nearrow
          & (x_{\ell2},y_{r2})
          & \nearrow
          & \cdots
          & \nearrow
          & (x_{\ell c},y_{rc}) \\
        \searrow
          & \vdots
          & \searrow
          & \cdots
          & \searrow
          & \cdots
          & \searrow \\
        \vdots
          & \nearrow
          & \vdots
          & \nearrow
          & \ddots
          & \nearrow
          & \vdots\\
        \searrow
          & \vdots
          & \searrow
          & \cdots
          & \searrow
          & \cdots
          & \searrow \\
        (x_{r1},y_{\ell 1})
          & \nearrow
          & (x_{r2},y_{\ell 2})
          & \nearrow
          & \cdots
          & \nearrow
          & (x_{rc},y_{\ell c})
    \end{array}
    \]
    Here each $\nearrow$ and $\searrow$ relation follows directly from the row and column increasing conditions of $Q$ and $\evacuation(P)$.

    From these relations, we see directly that $\mathcal{L}_1(S_k, \nearrow) = S_{k1}$, and likewise for later shadows. The first claim follow. For the third, observe
      \[ (x_{\ell j}, y_{r j})
         \nearrow (x_{\ell+1,j}, y_{rj})
         \searrow (x_{\ell+1,j}, y_{r-1,j})
         \nearrow (x_{\ell+2,j}, y_{r-1,j})
         \searrow \cdots. \]
    This sequence interleaves $S_{kj}$ and $S_{k+1,j}$ in the required way. Now the second claim follows from the first and third.
\end{proof}

We may now prove \Cref{thm:main}.

\begin{proof}[Proof of \Cref{thm:main}]
    We begin with \eqref{eq:main-1}. By \Cref{lem:proof-M}, $\mathcal{P}^{\NE}(\rho_r) = S_r$. By \Cref{lem:proof-shadows}, the north rays in the $\ell$-th iteration of Viennot's construction exit the grid in positions $x_{\ell 1}, x_{\ell 2}, \ldots, x_{\ell c}$, and the east rays exit the grid in positions $y_{\ell 1}, y_{\ell 2}, \ldots, y_{\ell c}$. These are the $\ell$-th rows of $Q$ and $\evacuation(P)$, respectively, giving \eqref{eq:main-1}.

    Now consider \eqref{eq:main-2}. By \Cref{lem:proof-M}, $\mathcal{P}^{\NE}(\rho_{r+k}) = S_{r+k}$. Hence \eqref{eq:main-2} follows from \Cref{lem:proof-shadows}. The more precise claim also follows from the final part of \Cref{lem:proof-shadows}.
\end{proof}

\bibliographystyle{amsalphavar}
\bibliography{main}

\end{document}